\newtheorem{theorem}{Theorem}[section]
\newtheorem{definition}[theorem]{Definition}
\newtheorem{cor}[theorem]{Corollary}
\newtheorem{example}[theorem]{Example}
\newtheorem{remark}[theorem]{Remark}
\title{\Large\bf The strong b-lattice of quasi skew-rings}
\author{\bf Rituparna Ghosh }
\date{}
\begin{document}

\maketitle

\vspace{-3em}
\begin{center}
{\small\it Department of Mathematics, Vivekananda Mahavidyalaya,}\\
{\small\it Haripal, Hooghly 712405, India.}\\
{\small e-mail: rituparnaghosh@vmharipal.ac.in }
\end{center}

\begin{abstract}
\noindent
The notions of GV-inverse semigroups and completely $\pi$-inverse semigroups accord when extended under semirings to quasi completely inverse semirings. A semiring is quasi completely inverse semiring if and only if it is b-lattice of quasi skew-rings. A semiring $S$ is strongly additively quasi completely inverse semiring
if $S$ is quasi completely regular and strongly additively quasi inverse semiring. In this paper we study the relation between quasi completely inverse semirings and strongly additively quasi completely inverse semirings and give the necessary and sufficient conditions for a strongly additively quasi completely inverse semiring to be a strong b-lattice of quasi skew-rings.
\end{abstract}

\vspace{.5em}
AMS Mathematics Subject Classification (2010):
16Y60, 20M10, 20M07.

{\bf Key Words: }{quasi completely regular semirings, quasi completely inverse semirings, strongly additively quasi completely inverse semiring, generalized Clifford semiring, strong b-lattice of semirings.}

\section{Introduction}
In 1934, Vandiver averred
the idea of associative algebra or, semiring as a set of elements forming both a semigroup
under addition and under multiplication where the right and left hand
distributive laws hold. Semigroup theory is
considered to be the constituent of semiring theory. Various semigroup
structures and properties have been extended analogously in semiring
theory. In 1941, A. H. Clifford \cite{cliffrd1} established that a semigroup contains relative inverses if and only if it is a class sum of mutually disjoint groups and also if a semigroup admits relative inverses then it can be characterised as semilattices
of completely simple semigroups. This became the onset of the study of semilattice decompositions of semigroups and the bedrock for the structural
investigation of completely regular semigroups. In \cite{PR}, M. Petrich and N.R. Reilly  characterized completely regular semigroups as a union of groups and also as 
semilattice of completely simple semigroups. They also studied special classes of completely regular semigroups called Clifford semigroups. Clifford semigroups 
can be expressed as a strong semilattice of groups \cite{howie}. Such semigroups are considered as generalization of groups. In the year 2006, Sen, Maity and Shum \cite{maity5} extended the idea of completely regular semigroups in semiring theory and proved that a semiring S is completely regular if
and only if it is union of skew-rings if and only if it is a b-lattice of completely simple
semirings. The authors also studied generalised Clifford semirings \cite{sen1}  as those completely regular semirings
$S$ which are additive inverse semirings where the set of all additive idempotent elements of $S$, denoted by $E^{+}(S)$, are k-ideals of $S$. It was determined that generalised Clifford semirings are strong b-lattice of skew-rings.

Many authors like M. L. Veronesi, S. Bogdanovic and M. Ciric studied decomposition of completely $\pi$-regular semigroups into
semilattice of Archimedean semigroups. It was established that S is completely Archimedean
if and only if it is a nil-extension of completely simple semigroups if and only if it is
Archimedean and completely $\pi$-regular \cite{bogda1}. It was established that a GV - semigroup is $\pi$-regular and every $\mathcal{H^{*}}$-class is a $\pi$-group, where a $\pi$-group is nil-extension of a group. A semigroup S is a GV-inverse semigroup if S is a
GV-semigroup and every regular element of S possesses a unique inverse. A semigroup
is GV-inverse if and only if it is a semilattice of $\pi$-groups. Recently in 2018, J. Zhang, Y. Yang, and S. Ran \cite{zhangmain} established the necessary and sufficient conditions for a GV-inverse semigroup to be a strong semilattice of $\pi$-groups.

Motivated by these research literatures, in \cite{maity1} we extended the structure theorem of completely $\pi$- regular semigroups and GV-semigroups (semigroups
of Galbiati-Veronesi) in the domain of semirings and called them quasi completely regular semirings.
In \cite{maity1} quasi completely regular semirings have been characterized not only as (disjoint) union of quasi skew-rings but also as idempotent semiring of quasi skew-rings and b-lattice of completely Archimedean semirings. Moreover it was proved in \cite{maity1} that every additively regular element of a quasi completely regular semiring is completely regular, implying the most important finding here that when
extended over semirings, completely $\pi$- regular semigroup and GV-semigroup structures are no more different. In the subsequent studies several results and congruences \cite{maity2, maity3} were established on quasi completely regular semirings which includes nil-extension of completely simple semirings where nil-extension on semirings were defined with the
help of bi-ideals. In \cite{maity4} we initiated the notions of quasi completely inverse semiring and strongly additively quasi completely inverse semiring and also  characterized a quasi completely inverse semiring to be b-lattice of quasi skew-rings.

In this paper we further relate the notions of strongly additively quasi completely inverse semiring with quasi completely inverse semiring and give the necessary and sufficient conditions for a strongly additively quasi completely inverse semiring to be a strong b-lattice of quasi skew-rings. The priliminaries required for the study are discussed in Section $2$. In Section $3$ the ideas and results of quasi completely regular semirings and quasi completely inverse semirings are recollected. The important concepts and main result of the paper are discussed in the Sections $4$ and $5$.
 
\noindent

\section{Priliminaries}

A semiring $(S, +, \cdot)$ is a type (2, 2) algebra whose
semigroup reducts $(S, +)$ and $(S, \cdot)$ are connected by ring
like distributivity, that is, $a(b+c)=ab+ac$ \, and
$(b+c)a=ba+ca$ for all $a, b, c \in S$. $S$ is not considered to be commutative. 

Let $(S, +, \cdot)$ be a semiring. An element $a$ in $S$ is: 

\begin{itemize}
    \item [-] \emph{additively regular} if there exists an element $x\in S$
such that $a+x+a=a$;
    \item [-] \emph{additively completely regular} if there exists an element $x\in S$
such that $a+x+a=a$ and $a+x=x+a$; if such an element $x$ exists then it is unique and satisfies $x=x+a+x$;
    \item[-] \emph{completely regular} \cite{maity5} if there exists $x\in S$
such that $a=a+x+a$, $a+x=x+a$ and $a(a+x)=a+x$;
    \item[-] \emph{additively quasi regular} if there exists a
positive integer $n$ such that the element
$na=\underbrace{a+a+\dots+a}_{\text{$n$ times}}$ is additively
regular;
    \item[-] \emph{additively quasi completely regular}
 if there exists a positive integer $n$ such that
$na$ is additively completely regular, that is, there exists an element $x
\in S$ such that $na$ = $na+x+na$ and $na+x$ = $x+na$; if such an element $x$ exists then it is unique and satisfies $x=x+na+x$; 
    \item[-] \emph{quasi completely regular}
\cite{maity1} if there exists a positive integer $n$ such that
$na$ is completely regular, that is, there exists an element $x
\in S$ such that $na$ = $na+x+na$, $na+x$ = $x+na$, and
$na(na+x)$  = $na+x$. 
\end{itemize}

A semiring $(S, +, \cdot)$ is a
\emph{skew-ring} \cite{maity5} if its additive reduct $(S,+)$ is a group, but not
necessarily abelian group.
If for every $a\in S$, there exists a positive integer $n$ such that $na$ lies in a subskew-ring $R$ of $S$, then
$S$ is said to be a \emph{quasi skew-ring} \cite{maity1}. 
A semigroup is called an \emph{inverse semigroup} if for every $a \in S$
there exists a unique element $a'\in S$ such that $aa'a=a$ and
$a'aa'=a'$. A semigroup is
\emph{$\pi$-inverse semigroup} if for every $a\in S$ there exists a
positive integer $m$ and a unique $x\in S$ such that $a^m=a^m x
a^m$ and $x=x a^m x$. A semigroup $S$ is \emph{strongly $\pi$-inverse} semigroup 
if $S$ is $\pi$-regular and the idempotent elements in $S$ commute. In \cite{PR}, the authors defined a special case of semilattice of semigroups in which the multiplication is defined by means of homomorphisms called strong semilattice of semigroups. We recall that definition as follows:

Let $Y$ be a semilattice and for each $\alpha \in Y$, let $S_{\alpha}$ be a semigroup, $S_{\alpha} \cap S_{\beta}$ = $\emptyset$ if $\alpha \neq \beta$. For each $\alpha, \beta \in Y$ with $\alpha \geq \beta$, let $\chi_{_{\alpha, \beta}}: S_{\alpha} \longrightarrow S_{\beta} $ be a homomorphism such that

(i) $\chi_{_{\alpha, \alpha}}$ = $I_{_{S_{\alpha}}}$;

(ii) $\chi_{_{\alpha, \beta}}$$\chi_{_{\beta, \gamma}}$ = $\chi_{_{\alpha, \gamma}}$ if $\alpha \geq \beta \geq \gamma$.

On $S$ = $\displaystyle{\bigcup_{\alpha \in Y}S_\alpha}$, define a multiplication by

\begin{center}
    $a \star b$ = $(a \chi_{_{\alpha, \alpha \beta}})$$(b\chi_{_{\beta, \alpha\beta}})$ \,\,\, $(a \in S_{\alpha}, b \in S_{\beta})$.
\end{center}
With this multiplication, $S$ forms a \emph{strong semilattice $Y$ of semigroups} $S_{\alpha}$ and is denoted by  $S=[Y; S_\alpha, \chi_{_{\alpha,\beta}}]$. In \cite{sen1} the authors have given a structure {in semirings which is analogous to the structure of strong semilattice of semigroups where they have used b-lattice \cite{maity5} instead of semilattice. They called it \emph{strong b-lattice of semirings}. The definition has been recalled in Section $5$.

A semiring $(S, +, \cdot)$ is :

\begin{itemize}
    \item[-] an
\emph{additive inverse semiring} if its additive reduct $(S, +)$ is an
inverse semigroup;
    \item[-] an \emph{additively quasi
inverse semiring} if its additive reduct $(S, +)$ is a $\pi$-inverse semigroup, i.e.,
for every $a \in S$ there exists a positive integer $n$ and a unique $x
\in S$ such that, $na+x+na$=$na$ and $x+na+x$=$x$;
    \item[-] a \emph{quasi completely inverse semiring} \cite{maity4} if $S$ is quasi completely
regular semiring and additively quasi inverse semiring.
\end{itemize}
 
 A semiring $(S, +, \cdot)$ is a \emph{b-lattice} \cite{maity5} if
$(S, \cdot)$ is a band and $(S, +)$ is a semilattice. An \emph{ideal} $I$ of a semiring $S$ is a non-empty subset of $S$ such that for all $a,b \in I$ and $x \in S$, $a+b, \,\,xa,\,\, ax \in I$. An ideal $I$ of a semiring $S$ is a \emph{k-ideal} \cite{sen1} of $S$ if $a \in I$ and either $a+x \in I$ or $x+a \in I$ for some $x \in S$ implies $x \in I$. A non-empty subset $I$ of semiring $S$ is
said to be a \emph{bi-ideal} \cite{monico} of a semiring $S$ if $a\in I$ and $x\in S$ imply that
$a+x,\,\, x+a,\,\, ax,\,\, xa\, \in I$. 
Let $I$ be a
bi-ideal of a semiring $S$. We define a relation $\rho_{_I}$ on
$S$ by $a \rho_{_I} b$ if and only if either $a, b \in I$ or $a =
b$ where $a, b \in S$. It is easy to verify that $\rho_{_I}$ is a
congruence on $S$. This congruence is said to be \emph{Rees congruence}
on $S$ and the quotient semiring $S/\rho_{_I}$ contains a zero,
namely $I$. This quotient semiring $S/\rho_{_I}$ is said to be
the \emph{Rees quotient semiring} \cite{maity2} and is denoted by $S/I$. In this case
the semiring $S$ is said to be an \emph{ideal extension} or simply an
extension of  $I$ by the semiring $S/I$. An ideal extension $S$
of  a semiring $I$ is a \emph{nil-extension} \cite{maity2} of $I$ if for any $a \in S$
there exists a positive integer $n$ such that $na \in I$.

Throughout this paper, certain notations shall be followed. $E^+(S)$ will be the set of all
additive idempotents of the semiring $S$, the set
of all additive inverses of $a$ (if it exists) in a semiring $S$
will be denoted by $V^+(a)$, the set of all
additively regular elements of a semiring $S$ will be denoted by $Reg^+(S)$ and the set of all completely regular elements in the semiring $S$ will be denoted by $Cr(S)$ . The
identity relation and the universal relation on $S$ are denoted by
$\epsilon_{S}$ and $\omega_{S}$, respectively or, simply by
$\epsilon$ and $\omega$ when no confusion arises.

A congruence $\rho$ on a semigroup $S$ is called \emph{idempotent-separating} \cite{PR} if each $\rho$-class contains atmost one idempotent, that is, ${\rho}{\big{|}_{E(S)}}=\epsilon$. A congruence $\rho$ on a semiring $S$ is called \emph{additive idempotent-separating} if each $\rho$-class contains atmost one additive idempotent, that is, ${\rho}{\big{|}_{E^{+}(S)}}=\epsilon$.
A congruence $\rho$
on a semiring $S$ is called a \emph{b-lattice congruence (idempotent
semiring congruence)} if $S/\rho$ is a b-lattice (respectively, an
idempotent semiring). A semiring $S$ is called a b-lattice
(idempotent semiring ) $Y$ of semirings $S_{\alpha} ( \alpha \in
Y) $ if $S$ admits  a b-lattice congruence (respectively, an
idempotent semiring congruence) $\rho$ on $S$ such that $Y = S/
\rho$ and each $S_{\alpha}$ is a $\rho$-class mapped onto
$\alpha$ by the natural epimorphism $ {\rho}^{\#}\,\,:\,S
\longrightarrow Y$. We write $S$ = $(Y;S_{\alpha})$ in such a case.

As usual, the Green's relations on the
semiring $(S, +, \cdot)$ are denoted by by $\mathscr{L}$, $\mathscr{R}$,
$\mathscr{D}$, $\mathscr{J}$ and $\mathscr{H}$ and
correspondingly, the $\mathscr{L}$-relation,
$\mathscr{R}$-relation, $\mathscr{D}$-relation,
$\mathscr{J}$-relation and $\mathscr{H}$-relation on $(S, +)$ are
denoted by $\mathscr{L}^+$, $\mathscr{R}^+$, $\mathscr{D}^+$,
$\mathscr{J}^+$ and $\mathscr{H}^+$, respectively. For any $a
\in S$, we let $H^+_a$ be the $\mathscr{H}^+$-class in $(S, +)$
containing $a$. If $(S, +,
\cdot)$ be an additively quasi regular semiring, then the
relations $\mathscr{L}^{*+}, \mathscr{R}^{*+}, \mathscr{J}^{*+},
\mathscr{H}^{*+}$ and $\mathscr{D}^{*+}$ are defined by
\vspace{-.7em}
\begin{center}
$a\, \mathscr{L}^{*+} \, b$ if and only if  $pa \,
\mathscr{L}^{+} \, qb$

$a\, \mathscr{R}^{*+} \, b$ if and only if $pa \, \mathscr{R}^{+}
\, qb$

$a\, \mathscr{J}^{*+} \, b$ if and only if $pa \, \mathscr{J}^{+}
\, qb$

$\mathscr{H}^{*+} = \mathscr{L}^{*+} \cap \mathscr{R}^{*+}$ \, \,
\, \, and\,\, \, \, \,   $\mathscr{D}^{*+} = \mathscr{L}^{*+} \, o\,
\mathscr{R}^{*+}$
\end{center}
\vspace{-1em} where $p$ and $q$ are the smallest positive integers
such that $pa$ and $qb$  are additively regular.

A completely regular semiring $(S, +, \cdot)$ is said to be
\emph{completely simple} \cite{maity5} if any two elements of $S$ are
$\mathcal{J}^{+}$- related.
A quasi
completely regular semiring $(S, +, \cdot)$ is said to be
\emph{completely Archimedean} \cite{maity1} if any two elements of $S$ are
$\mathcal{J}^{*+}$- related.

Next we recall some results from \cite{maity2, maity1} that will be crucial in our study.

\begin{theorem}\label{17}
The following conditions on a semiring $(S,+,\cdot)$ are equivalent:

(i) $S$ is additively quasi regular with exactly one additive idempotent;

(ii) $S$ is a quasi skew-ring;

(iii) $S$ is a nil-extension of a skew-ring.
    
\end{theorem}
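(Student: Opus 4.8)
The plan is to establish the cycle (i) $\Rightarrow$ (iii) $\Rightarrow$ (ii) $\Rightarrow$ (i), with essentially all of the work concentrated in the direction (i) $\Rightarrow$ (iii). Throughout I write $e$ for the (unique, by hypothesis) additive idempotent of $S$ and let $G = H^+_e$ be the $\mathscr{H}^+$-class of $e$, i.e.\ the maximal additive subgroup at $e$, whose additive reduct is a group with identity $e$. I would first record the basic lemma that every additively regular element lies in $G$: if $a + x + a = a$, then replacing $x$ by $x + a + x$ (the standard trick, which forces $x + a + x = x$ as well) makes $a + x$ and $x + a$ additive idempotents, hence both equal to $e$; from $a + x = x + a = e$ one reads off $e + a = a + e = a$, so $a \in H^+_e = G$. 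Combined with additive quasi-regularity, this already shows that for every $a \in S$ some multiple $na$ lies in $G$, which will serve as the nil condition.

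Next I would exploit distributivity against $e + e = e$. Since $(e+e)s = es + es$ equals $es$, the element $es$ is an additive idempotent and therefore $es = e$; symmetrically $se = e$, so $e$ is multiplicatively absorbing. Using this, $G$ is seen to be a subskew-ring: for $g_1, g_2 \in G$ one computes $e + g_1 g_2 = (e + g_1)g_2 = g_1 g_2$ and $g_1 g_2 + e = g_1(g_2 + e) = g_1 g_2$, while $g_1 g_2 + g_1 g_2' = g_1(g_2 + g_2') = g_1 e = e$ (with $g_2'$ the additive inverse of $g_2$ in $G$) exhibits an additive inverse, whence $g_1 g_2 \in G$. The same factoring, together with the identity $(gs) + (g's) = (g + g')s = e$, gives $gs, sg \in G$ for all $g \in G$ and $s \in S$, so $G$ is a multiplicative bi-ideal.

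The step I expect to be the main obstacle is the additive bi-ideal property, namely $g + s, s + g \in G$ for all $g \in G$, $s \in S$. Because $g + s = g + (e + s)$ and $s + g = (s + e) + g$ with $G$ an additive subgroup, it suffices to prove $e + s, s + e \in G$, and by the lemma it is enough to show these two elements are additively regular. For this I would use a multiple $na \in G$ (the case $n=1$ being trivial): from $(e + s) + (n-1)s = e + ns = ns \in G$ and the inverse $w$ of $ns$ in $G$ one obtains $(e+s) + t = e$ with $t = (n-1)s + w$, whence $(e+s) + t + (e+s) = e + (e+s) = e + s$, so $e + s$ is additively regular; the computation for $s + e$ is symmetric. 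This makes $G$ a bi-ideal, and together with the nil condition it shows that $S$ is a nil-extension of the skew-ring $G = H^+_e$, proving (iii).

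Finally, (iii) $\Rightarrow$ (ii) is immediate, since the skew-ring $R$ appearing in the nil-extension is a fixed subskew-ring containing a multiple of every element, which is precisely the definition of a quasi skew-ring. For (ii) $\Rightarrow$ (i), the additive reduct of that subskew-ring $R$ is a group, so each multiple lying in $R$ is additively regular, giving additive quasi-regularity; and any additive idempotent $f$ satisfies $f = nf \in R$ for a suitable $n$ (as $f + f = f$ forces $nf = f$), hence $f$ is the additive identity of the group $(R,+)$, which forces uniqueness of the additive idempotent and completes the cycle.
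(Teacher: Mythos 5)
Your proof is correct, but note that the paper itself contains no proof of this statement to compare against: Theorem \ref{17} is recalled without proof from the cited earlier works \cite{maity2,maity1}, where the argument given is essentially the one you reconstructed. Your route --- isolating the maximal additive subgroup $G=H^+_e$ at the unique idempotent, showing every additively regular element lies in $G$ via the reflexive-inverse trick $x\mapsto x+a+x$, deducing $es=se=e$ from distributivity over $e+e=e$ to get multiplicative absorption, and then the telescoping identity $(e+s)+(n-1)s=e+ns=ns\in G$ to prove additive absorption, so that $G$ is a bi-ideal skew-ring with the nil property --- is the standard proof of this equivalence, and all the delicate points are handled properly: you treat $e+s$ and $s+e$ separately (addition is not assumed commutative), and your uniqueness argument in (ii)$\Rightarrow$(i) correctly reads the definition of quasi skew-ring with a \emph{fixed} subskew-ring $R$ (the paper's wording is ambiguous on this, but the fixed-$R$ reading is the one used in \cite{maity1}, and is what makes (ii)$\Rightarrow$(i) true, since every additive idempotent $f$ satisfies $f=nf\in R$ and a group has a unique idempotent).
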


\begin{theorem}\label{2}
If $S$ is a quasi completely regular semiring then $Reg^{+}(S)=Cr(S)$.
\end{theorem}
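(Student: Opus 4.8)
The plan is to prove the two inclusions separately, the inclusion $Cr(S)\subseteq Reg^{+}(S)$ being immediate: if $a$ is completely regular then in particular $a=a+x+a$, so $a\in Reg^{+}(S)$. The whole weight of the theorem lies in the reverse inclusion $Reg^{+}(S)\subseteq Cr(S)$, which I would reduce to a single additive statement. Fix $a\in Reg^{+}(S)$ and choose $x\in V^{+}(a)$, so that $a+x+a=a$ and $x+a+x=x$; put $e=a+x$ and $f=x+a$, both additive idempotents with $e+a=a=a+f$. A first lemma I would record is that \emph{every} $u\in E^{+}(S)$ is multiplicatively idempotent and completely regular: since $nu=u$ for all $n\ge 1$, quasi complete regularity supplies an $n$ with $nu=u$ completely regular, and because $u$ is the additive identity of its own $\mathscr{H}^{+}$-group the only admissible completely regular inverse is $u$ itself, forcing $u\cdot u=u$. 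In particular $e^{2}=e$, $f^{2}=f$.

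The second ingredient is the clean observation that multiplying any element by an additive idempotent produces an additive idempotent: if $u\in E^{+}(S)$ and $s\in S$ then, using $u=u+u$ and distributivity, $su=s(u+u)=su+su$ and $us=(u+u)s=us+us$, so $su,us\in E^{+}(S)$. Now invoke quasi complete regularity to get $n$ and $w$ with $na$ completely regular: set $g=na+w=w+na\in E^{+}(S)$, so that $na\cdot g=g$. Expanding by right distributivity, $na\cdot g=(a+\cdots+a)g=ag+\cdots+ag=n(ag)$, and since $g\in E^{+}(S)$ the previous observation gives $ag\in E^{+}(S)$, whence $n(ag)=ag$. Comparing, $ag=g$. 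Moreover $g$ is the unique additive idempotent of the (finite) additive subsemigroup generated by $a$, so $g=ca$ for some positive integer $c$; this is the point at which the multiplicative structure begins to act on the additive monogenic structure of $a$.

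It now suffices to prove the purely additive-looking claim that $a$ is additively completely regular with group identity $g$, i.e.\ $a\in H^{+}_{g}$; for once this is known, $a+x'=x'+a=g$ for a suitable $x'$, and the multiplicative condition is already in hand, since $a(a+x')=ag=g=a+x'$, so that $a\in Cr(S)$ by definition. To establish $a\in H^{+}_{g}$ I would pass through the structural description of quasi completely regular semirings: $S$ is a b-lattice of completely Archimedean semirings, each of which is a nil-extension of a completely simple semiring (the additive core) in the sense of Theorem~\ref{17}. Within the component of $a$, an additively regular element is forced into the completely simple core, where every element is additively completely regular; tracking $e,f,g$ through the relations $e+g=g$ and $g+f=g$ derived above then collapses $e=f=g$, placing $a$ in the skew-ring $H^{+}_{g}$.

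The main obstacle is exactly this last step. It cannot be obtained from the additive reduct alone: a completely $\pi$-regular semigroup (such as the five-element combinatorial Brandt semigroup $B_{2}$) may well contain regular elements that are not completely regular, so ``additively regular $\Rightarrow$ additively completely regular'' is \emph{false} at the semigroup level. Thus the argument must genuinely exploit distributivity, and the burden of the proof is to show that in the presence of the multiplicative relations $ag=g$ and $g=ca$, together with the nil-extension structure of the component, the left identity $e$ and right identity $f$ of $a$ are forced to coincide with the core idempotent $g$. Everything else is bookkeeping around the two lemmas above.
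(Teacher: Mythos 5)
Your preparatory material is sound: the inclusion $Cr(S)\subseteq Reg^{+}(S)$, the lemma that every $u\in E^{+}(S)$ is multiplicatively idempotent (via uniqueness of the additively completely regular inverse), the observation that $su,\,us\in E^{+}(S)$ for $u\in E^{+}(S)$, the identity $ag=g$, and the reduction ``if $a\in H^{+}_{g}$ then $a\in Cr(S)$'' are all correct. The proof breaks precisely at the two steps meant to carry the weight. First, the claim that $g$ is the unique additive idempotent of the (finite) additive subsemigroup generated by $a$, so that $g=ca$, is false: take $S=\mathbb{Z}$ with the usual operations (a skew-ring, hence quasi completely regular) and $a=1$; then $\{a,2a,3a,\dots\}$ is infinite and contains no additive idempotent at all, and $g=0$ is not a positive multiple of $a$. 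Consequently the relations $e+g=g$ and $g+f=g$, which you call ``derived above,'' were never derived; they would follow from $g=ca$ but from nothing else you established.

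Second, and decisively, the endgame --- forcing $e=f=g$ for the \emph{originally chosen} inverse $x$ --- cannot be carried out, because it is false: complete regularity asks that \emph{some} inverse of $a$ commute with $a$, not the one you happened to pick. Take the rectangular band $B=\{1,2\}\times\{1,2\}$ with $(i,j)+(k,l)=(i,l)$, made into a semiring by declaring multiplication equal to addition; this is a completely simple semiring, so it equals its own core and every one of your ingredients is available. With $a=(1,1)$, $x=(2,2)$ one has $x\in V^{+}(a)$, $e=a+x=(1,2)$, $f=x+a=(2,1)$, $g=(1,1)$, and here $ag=g$, $g=ca$, $e+g=g$, $g+f=g$ all hold, yet $e,f,g$ are pairwise distinct. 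So no tracking of these idempotents can collapse them; the argument must discard $x$ and exhibit a fresh inverse. That is exactly what the structural route you gesture at accomplishes, and it renders the multiplicative computations unnecessary: write $S$ as a b-lattice $Y$ of completely Archimedean semirings $S_{\alpha}$ (Theorem \ref{1}), each a nil-extension of a completely simple semiring $K_{\alpha}$ \cite{maity2}. If $a\in S_{\alpha}$ and $a+y+a=a$ with $y\in S_{\beta}$, then disjointness of the classes and the semilattice law force $\alpha+\beta=\alpha$, hence $y+a\in S_{\alpha}$; iterating gives $a=a+k(y+a)$ for all $k$, and choosing $m$ with $m(y+a)\in K_{\alpha}$ yields $a=a+m(y+a)\in K_{\alpha}$ because $K_{\alpha}$ is a bi-ideal of $S_{\alpha}$. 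Since $K_{\alpha}$ is a completely regular semiring, $a$ is completely regular, the commuting inverse now being the inverse of $a$ in its group $\mathscr{H}^{+}$-class inside $K_{\alpha}$. (Note that the present paper only recalls Theorem \ref{2} from \cite{maity1,maity2} without proof; the argument just sketched is the one those papers use.)
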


\begin{theorem}\label{1}
The following conditions on a semiring
$(S, +, \cdot)$ are equivalent:

(i) $S$ is a quasi completely regular semiring;

(ii) every $\mathscr{H}^{*+}$- class is a quasi
skew-ring;

(iii) $S$ is a (disjoint) union of quasi skew-rings;

(iv) $S$ is a b-lattice of completely Archimedean
semirings;

(v) $S$ is an idempotent semiring of quasi skew-rings.
\end{theorem}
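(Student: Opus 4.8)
The plan is to treat (i) as the hub: I would show that each of the three decomposition statements (iii), (iv), (v) forces (i), and conversely that (i) yields all three, routing the converse direction through the cycle (i) $\Rightarrow$ (ii) $\Rightarrow$ (iii) for the union description and through two congruence constructions for (iv) and (v). The implications into (i) are the routine half. In each of (iii), (iv), (v) an arbitrary $a \in S$ lies in a single block $B$ which is a subsemiring, and it suffices to produce a completely regular multiple of $a$ inside $B$, since the defining identities then persist in $S$ by closure of $B$ under both operations. For (iv) this is immediate, since a completely Archimedean block is by definition quasi completely regular. For the quasi skew-ring blocks of (iii) and (v) one invokes Theorem \ref{17}: $na$ sits in a subskew-ring $R$, where $na + x = x + na = 0_R$, and distributivity together with additive cancellation in the group $(R,+)$ gives $na \cdot 0_R = 0_R$, so that $na(na+x) = na+x$ and $na$ is completely regular. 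Hence (iii), (iv), (v) each imply (i).

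For (i) $\Rightarrow$ (ii) I would fix $a \in S$, let $p$ be the least positive integer for which $pa$ is additively regular, and note by Theorem \ref{2} that $pa$ is then completely regular, so that its additive $\mathscr{H}^+$-class is a group with additive identity $e = pa + x = x + pa$, where $x$ is the unique additive inverse of $pa$. The goal is to show that the class $H = H^{*+}_a$ is a quasi skew-ring by verifying the hypotheses of Theorem \ref{17}(i): that $H$ is a subsemiring, that it is additively quasi regular, and that it has exactly one additive idempotent. Additive quasi regularity of $H$ is inherited from that of $S$ once one checks that a regularizing multiple of an element stays in its own $\mathscr{H}^{*+}$-class, which follows since the additive multiples $m(pb)$ of a group element $pb$ remain in the group $H^+_{pb}$. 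The single additive idempotent is $e$: any additive idempotent of $H$ lies in one $\mathscr{H}^{*+}$-class, and two such idempotents, being $\mathscr{H}^+$-related group identities, must coincide. The delicate point is closure of $H$ under addition and multiplication, where one must control the interaction between the starred relations and the regularizing multiples; granting closure, Theorem \ref{17} delivers that $H$ is a quasi skew-ring.

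Since $\mathscr{H}^{*+} = \mathscr{L}^{*+} \cap \mathscr{R}^{*+}$ is an equivalence relation, its classes partition $S$, so (ii) $\Rightarrow$ (iii) is immediate. For (i) $\Rightarrow$ (v) and (i) $\Rightarrow$ (iv) I would exhibit the relevant congruences on top of the $\mathscr{H}^{*+}$-decomposition. For (v) the expectation is that $\mathscr{H}^{*+}$, or the least idempotent-semiring congruence containing it, is an idempotent-semiring congruence whose classes are exactly the quasi skew-rings produced in (ii). For (iv) the target is a b-lattice congruence: I would show $\mathscr{J}^{*+} = \mathscr{D}^{*+}$ and that this relation is a b-lattice congruence, with each class a quasi completely regular semiring in which all elements are $\mathscr{J}^{*+}$-related, that is, completely Archimedean by definition.

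The main obstacle I anticipate is the step (i) $\Rightarrow$ (ii), specifically proving that an $\mathscr{H}^{*+}$-class is closed under addition and carries a unique additive idempotent; this is where the multiplicative structure and the additively-regularizing multiples must be reconciled, and it is the substance behind the clean characterization of Theorem \ref{17}. A secondary hurdle is verifying, for (iv), that $\mathscr{J}^{*+}$ is genuinely a b-lattice congruence, i.e. that addition descends to a commutative idempotent (semilattice) operation and multiplication to a band operation on the quotient; once these are in place, the decompositions (iii), (iv), (v) are obtained by suitably organizing or coarsening the $\mathscr{H}^{*+}$-classes.
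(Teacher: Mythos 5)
First, a point of context: the paper you were asked to match does not actually prove this theorem. It is stated in Section 2 as a recalled result from the earlier work of Maity and Ghosh (\cite{maity1}, \cite{maity2}), so there is no in-paper argument to compare yours against; your proposal has to stand or fall on its own. The parts you do carry out are correct. The implication (iv) $\Rightarrow$ (i) is indeed immediate, since a completely Archimedean semiring is by definition quasi completely regular; your handling of (iii) $\Rightarrow$ (i) and (v) $\Rightarrow$ (i) via Theorem~\ref{17} is sound, because for $na$ in the skew-ring kernel $R$ with $na+x=x+na=0_R$ one gets $na\cdot 0_R = na\cdot 0_R + na\cdot 0_R$ and hence $na(na+x)=na+x$ by cancellation in $(R,+)$; and (ii) $\Rightarrow$ (iii) follows from $\mathscr{H}^{*+}$ being an equivalence relation, as you say.

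The genuine gap is that every implication out of (i) — which is where the entire content of the theorem lies — is deferred rather than proved. For (i) $\Rightarrow$ (ii) you write that ``granting closure'' of an $\mathscr{H}^{*+}$-class under addition and multiplication, Theorem~\ref{17} finishes the argument; but that closure is precisely the hard claim. Nothing in the definition of $\mathscr{L}^{*+}$ and $\mathscr{R}^{*+}$ makes them compatible with the operations: the relations are defined through regularizing multiples $pa$, $qb$, and one must use distributivity together with the completely regular identity $na(na+x)=na+x$ to show that sums and products of $\mathscr{H}^{*+}$-related elements stay $\mathscr{H}^{*+}$-related. (Note that the analogous statement is \emph{false} for bare semigroups: in a completely regular semigroup $\mathscr{H}$ need not be a congruence, so the semiring axioms are doing essential work here and cannot be waved at.) The same applies to (i) $\Rightarrow$ (iv) and (i) $\Rightarrow$ (v): that $\mathscr{J}^{*+}$ is a b-lattice congruence, and that $\mathscr{H}^{*+}$ is an idempotent semiring congruence whose classes are the quasi skew-rings, are exactly the claims the cited source proves through a sequence of lemmas, and you list them as ``obstacles'' and ``hurdles'' without closing them. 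As it stands, your proposal is a correct road map — the trivial directions are done and the difficulty is correctly located — but the three substantive implications are missing, so it cannot be counted as a proof.
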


Lastly, a \emph{partial semigroup} is a structure $(S,+)$, 
where $`+$' is a \emph{partial binary operation}, i.e., for all $a, b, c \in S$, $(a + b) + c = a + (b + c)$ in the sense that if either side is defined, then so is the other and they are equal.
Similarly, considering both operations $`+$' and $`\cdot$' as partial binary operations and defining analogously we can construct \emph{partial semirings}.

For other notations and terminologies not given in this paper,
the reader is referred to the texts of Bogdanovic \cite{bogda1},
Howie \cite{howie}, Golan \cite{golan}, and Hebisch and Weinert
\cite{hwh}.

\section{Quasi Completely Inverse Semirings}

In the recent works \cite{maity2, maity1, maity3} the characterizations of quasi completely regular semirings have been entrenched. The aim of the study was to extend the concepts of decomposition of a semigroup into semilattice of subsemigroups to the domain of semirings. In \cite{maity4} quasi completely inverse semirings have been illustrated where some more structural characterizations occurred due to the introduction of inverse criterion to quasi completely regular semirings. This section mainly retrospects some of the important structural characterizations of quasi completely inverse semirings in an enhanced way as required for our discussion.

\begin{definition} \cite{maity4}
A semiring $S$ is a quasi completely inverse semiring
if $S$ is quasi completely regular semiring and additively quasi
inverse semiring.
\end{definition}

\begin{theorem}
\label{3} \cite{maity4}
The following conditions are equivalent on a semiring $(S, +, \cdot)$:

(i) $S$ is a quasi completely inverse semiring;

(ii) $S$ is a quasi completely regular semiring and for every $a \in S$ and $f \in E^{+}(S)$ there exists $m \in \mathbb{N}$ such that
$m(a+f)$ = $m(f+a)$.

(iii) $S$ is a quasi completely regular semiring and for every $e,\,f \in E^{+}(S)$ there exists $m \in \mathbb{N}$ such that
$m(e+f)$ = $m(f+e)$.

(iv) $S$ is quasi completely regular and $(a+b)
\,\,\mathscr{H}^{*+} \,\,(b+a)$ for all $a, b \in S$;

(v) $S$ is b-lattice of quasi skew-rings.
\end{theorem}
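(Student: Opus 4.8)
The plan is to prove Theorem~\ref{3} by establishing the cycle of implications
$(i)\Rightarrow(ii)\Rightarrow(iii)\Rightarrow(iv)\Rightarrow(v)\Rightarrow(i)$,
exploiting the fact that in any quasi completely regular semiring the additive structure is governed by the relations $\mathscr{H}^{*+}$ and by Theorem~\ref{1}, which already supplies the b-lattice decomposition into completely Archimedean semirings. The underlying philosophy is that being a quasi completely \emph{inverse} semiring is exactly the condition needed to collapse each completely Archimedean component to a quasi skew-ring, i.e.\ to force additive idempotents to be ``essentially central'' after passing to a suitable multiple $na$.

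\medskip

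\emph{The routine implications.} For $(i)\Rightarrow(ii)$, I would start from the additively quasi inverse hypothesis: for $a\in S$ and $f\in E^{+}(S)$, the element $a+f$ is additively quasi regular, so some multiple $p(a+f)$ lies in $Reg^{+}(S)=Cr(S)$ by Theorem~\ref{2}; uniqueness of the additive inverse of $p(a+f)$ together with the commuting relation $na+x=x+na$ in the definition of additive complete regularity is what pins down the identity $m(a+f)=m(f+a)$ for a common multiple $m$. The step $(ii)\Rightarrow(iii)$ is immediate by specializing $a$ to an additive idempotent $e$. For $(iii)\Rightarrow(iv)$ I would argue that the condition $m(e+f)=m(f+e)$ on idempotents, combined with quasi complete regularity, forces $(a+b)$ and $(b+a)$ to have equal additive inverses after passing to multiples, which is precisely the statement that $(a+b)\,\mathscr{H}^{*+}\,(b+a)$; here the characterization of $\mathscr{H}^{*+}$ via the smallest $n$ with $na$ additively regular is the working tool.

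\medskip

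\emph{The structural implications.} For $(iv)\Rightarrow(v)$, I would invoke Theorem~\ref{1}(iv): $S$ is already a b-lattice $Y$ of completely Archimedean semirings $S_\alpha$. The point is to upgrade each $S_\alpha$ from completely Archimedean to a quasi skew-ring, and by Theorem~\ref{17} this is equivalent to showing each $S_\alpha$ has exactly one additive idempotent. The condition $(a+b)\,\mathscr{H}^{*+}\,(b+a)$ should force any two additive idempotents lying in the same component to be $\mathscr{H}^{*+}$-related and hence, via the additively quasi inverse uniqueness, to coincide. Finally $(v)\Rightarrow(i)$ is essentially a verification: a b-lattice of quasi skew-rings is quasi completely regular by Theorem~\ref{1}(iii) (a quasi skew-ring is in particular a quasi skew-ring, so the union is a union of quasi skew-rings), and additive quasi inverseness follows because within each quasi skew-ring, which is a nil-extension of a skew-ring by Theorem~\ref{17}, additive inverses of regularized elements are unique.

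\medskip

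\emph{The main obstacle.} I expect the crux to be $(iv)\Rightarrow(v)$, specifically the uniqueness of the additive idempotent in each b-lattice component. The challenge is bookkeeping the positive-integer multiples: the relation $\mathscr{H}^{*+}$ is defined only after passing to the smallest $n$ making an element additively regular, so one must ensure that two additive idempotents $e,f$ in the same component, which are already additively regular with $n=1$, are genuinely forced to be equal rather than merely $\mathscr{H}^{*+}$-related at the level of some multiple. Reconciling the multiple-dependent definition of $\mathscr{H}^{*+}$ with the idempotent-level equality, and confirming that the commuting relation survives the passage back from $m(e+f)$ to $e+f$, is where the argument will require the most care.
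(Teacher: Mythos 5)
First, a point of order: the paper you were asked to match does not prove Theorem~\ref{3} at all --- it is recalled verbatim from \cite{maity4} --- so there is no internal proof to compare against, and your proposal can only be judged on its own merits. Judged that way, it has genuine gaps. The most serious is a circularity in your $(iv)\Rightarrow(v)$: you propose to conclude that two additive idempotents in the same completely Archimedean component coincide ``via the additively quasi inverse uniqueness,'' but additive quasi inverseness is condition $(i)$, which is precisely what is \emph{not} available at that point of your cycle $(i)\Rightarrow(ii)\Rightarrow(iii)\Rightarrow(iv)\Rightarrow(v)\Rightarrow(i)$. The correct mechanism needs no appeal to $(i)$: if $e,f\in E^{+}(S)$ lie in the same component $S_\alpha$, they lie in its completely simple kernel $K_\alpha$, where $e+f$ belongs to the $\mathscr{H}^{+}$-class $\mathscr{R}^{+}_{e}\cap\mathscr{L}^{+}_{f}$ and $f+e$ to $\mathscr{R}^{+}_{f}\cap\mathscr{L}^{+}_{e}$; since $\mathscr{H}^{*+}$ restricted to additively regular elements is just $\mathscr{H}^{+}$, the hypothesis $(e+f)\,\mathscr{H}^{*+}\,(f+e)$ forces $\mathscr{R}^{+}_{e}=\mathscr{R}^{+}_{f}$ and $\mathscr{L}^{+}_{e}=\mathscr{L}^{+}_{f}$, hence $e\,\mathscr{H}^{+}\,f$ and $e=f$, because a group $\mathscr{H}^{+}$-class contains only one idempotent. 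Relatedly, your reading of $\mathscr{H}^{*+}$ in $(iii)\Rightarrow(iv)$ as ``equal additive inverses after passing to multiples'' is a conflation: any two elements of one skew-ring are automatically $\mathscr{H}^{+}$-related, so $\mathscr{H}^{*+}$-relatedness is far weaker than equality of inverses (which would force equality of the elements themselves); the implication cannot be run element-by-element this way and in fact goes through $(v)$.

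The second major gap is $(i)\Rightarrow(ii)$, which in your proposal is a hope rather than a proof: ``uniqueness of the additive inverse of $p(a+f)$ together with the commuting relation'' does not yield $m(a+f)=m(f+a)$, because the two elements are never exhibited as inverses of a common element. Conditions $(ii)$--$(iv)$ are structural consequences, and the workable route (the one matching the semigroup theory of GV-inverse semigroups that this theorem transplants) is to prove $(i)\Leftrightarrow(v)$ \emph{first}: quasi inverseness forces each completely simple kernel $K_\alpha$ to have a single additive idempotent, so each completely Archimedean component is a nil-extension of a skew-ring, i.e.\ a quasi skew-ring by Theorem~\ref{17}. One then deduces $(ii)$ from $(v)$ by two elementary but essential facts your sketch never isolates: writing $a\in T_\alpha$, $f=e_\beta$ and $h=e_{\alpha+\beta}$, (a) $h$ commutes additively with $a$ and with $f$ (both $h+a$ and $a+h$ lie in $T_{\alpha+\beta}$, where $h$ is central by the nil-extension property), and (b) $f+h=h$, since $f+h$ is an additive idempotent of $T_{\alpha+\beta}$, which has only one. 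Then $x\mapsto x+h$ is an additive homomorphism on $T_{\alpha+\beta}$, so for $m$ large enough that $m(a+f),\,m(f+a)\in R_{\alpha+\beta}$ one gets $m(a+f)=m(a+f)+h=m(a+f+h)=m(a+h)$ and likewise $m(f+a)=m(a+h)$, proving the identity. Finally, your $(v)\Rightarrow(i)$ needs one more line: an additive inverse $x$ of $na$ must first be located in the same component $T_\alpha$ (via $x=x+na+x$ and the b-lattice congruence) before the group structure of $R_\alpha$ gives its uniqueness; uniqueness ``within each quasi skew-ring'' alone does not exclude inverses lying in other components.
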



\begin{theorem}\label{5}
A semiring $S$ is a quasi completely inverse semiring if and only if it is additively quasi regular and $\mathscr{H}^{*+}=\mathscr{J}^{*+}$. 
\end{theorem}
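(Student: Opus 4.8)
The plan is to separate the two inclusions $\mathscr{H}^{*+}\subseteq\mathscr{J}^{*+}$ and $\mathscr{J}^{*+}\subseteq\mathscr{H}^{*+}$ and to route both directions through the characterizations in Theorems~\ref{1} and~\ref{3}. First I would record the easy half: whenever $S$ is additively quasi regular the five starred relations are defined, and since $\mathscr{H}^{*+}=\mathscr{L}^{*+}\cap\mathscr{R}^{*+}$ while $\mathscr{L}^{*+},\mathscr{R}^{*+}\subseteq\mathscr{J}^{*+}$, one automatically has $\mathscr{H}^{*+}\subseteq\mathscr{J}^{*+}$. Using the chain $\mathscr{H}^{*+}\subseteq\mathscr{L}^{*+}\subseteq\mathscr{D}^{*+}\subseteq\mathscr{J}^{*+}$ and its mirror image, the hypothesis $\mathscr{H}^{*+}=\mathscr{J}^{*+}$ in fact collapses all five relations to a single one. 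So the real content is the equivalence between the reverse inclusion $\mathscr{J}^{*+}\subseteq\mathscr{H}^{*+}$ and the quasi completely inverse property.

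For the direct implication, suppose $S$ is quasi completely inverse. Then $S$ is quasi completely regular, hence additively quasi regular, and by Theorem~\ref{3}(v) I may write $S=(Y;S_\alpha)$ as a b-lattice of quasi skew-rings, which additively exhibits $(S,+)$ as a semilattice $Y$ of the semigroups $(S_\alpha,+)$. Given $a\,\mathscr{J}^{*+}\,b$, i.e. $pa\,\mathscr{J}^{+}\,qb$ with $pa,qb$ additively regular, I would push the relation through the natural homomorphism $(S,+)\to(Y,+)$: $\mathscr{J}^{+}$-related elements map to $\mathscr{J}$-related, hence equal, elements of the semilattice $Y$, so $pa$ and $qb$ lie in a common component $S_\gamma$. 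By Theorem~\ref{2} they are completely regular, so additively completely regular; taking $x\in V^{+}(pa)$ and setting $e=pa+x$, the image of $e$ in $Y$ is both $\le\gamma$ and $\ge\gamma$, so $e$ is the unique additive idempotent $0_\gamma$ of the quasi skew-ring $S_\gamma$ (Theorem~\ref{17}). Thus $pa$ and $qb$ both lie in the additive group $R_\gamma=H^{+}_{0_\gamma}$, a single $\mathscr{H}^{+}$-class, whence $pa\,\mathscr{H}^{+}\,qb$, that is $a\,\mathscr{H}^{*+}\,b$, and the reverse inclusion holds.

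For the converse, assume $S$ is additively quasi regular and $\mathscr{H}^{*+}=\mathscr{J}^{*+}$. The first step I would carry out is that every additively regular $c$ is additively completely regular: picking $x\in V^{+}(c)$, the idempotents $e=c+x$ and $f=x+c$ satisfy $e\,\mathscr{R}^{+}\,c\,\mathscr{L}^{+}\,f$, so $e\,\mathscr{D}^{+}\,f$ and therefore $e\,\mathscr{J}^{+}\,f$; on idempotents $\mathscr{J}^{*+}$ and $\mathscr{H}^{*+}$ reduce to $\mathscr{J}^{+}$ and $\mathscr{H}^{+}$, so the collapse gives $e\,\mathscr{H}^{+}\,f$, and since an $\mathscr{H}^{+}$-class carries at most one idempotent, $e=f$, i.e. $c+x=x+c$. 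This makes $S$ additively quasi completely regular. Granting full quasi complete regularity, Theorem~\ref{2} gives $Reg^{+}(S)=Cr(S)$, and combining the standard fact that $a+b$ and $b+a$ have $\mathscr{J}^{*+}$-equivalent regular multiples (since $2(a+b)=a+(b+a)+b$, and symmetrically) with the collapse yields $(a+b)\,\mathscr{H}^{*+}\,(b+a)$, which is condition (iv) of Theorem~\ref{3}; that theorem then returns the quasi completely inverse conclusion.

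I expect the genuine obstacle to be exactly the clause ``granting full quasi complete regularity'': promoting the additive quasi complete regularity just obtained to the multiplicative requirement $na(na+x)=na+x$ in the definition of complete regularity. The hypotheses are assertions about $(S,+)$ alone and are insensitive to the multiplication, so this step cannot be formal; the natural attempt is to show that each maximal additive subgroup $H^{+}_{e}$ underlies a subskew-ring, where the identity $r\cdot 0=0$ (forced in any skew-ring by distributivity and additive cancellation) yields the multiplicative condition. Establishing closure of $H^{+}_{e}$ under multiplication from distributivity and the band structure of $(S,\cdot)$ is the delicate point, and it is here that the full strength of quasi complete regularity — rather than mere additive quasi regularity — has to be brought to bear.
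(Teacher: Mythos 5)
Your forward direction is correct, and it is essentially a self-contained expansion of what the paper does by citation: the paper obtains $\mathscr{H}^{*+}=\mathscr{J}^{*+}$ by quoting Theorem~\ref{3}(v) together with \cite[Corollary 3.21]{maity2}, whereas you re-derive that fact by pushing a $\mathscr{J}^{+}$-relation between regular multiples through the additive homomorphism onto the b-lattice $Y$ (whose additive reduct is a semilattice, hence $\mathscr{J}$-trivial) and then locating both multiples in the kernel skew-ring of a single component. Your first step of the converse is also correct and mirrors the classical semigroup argument: for additively regular $c$ with inverse $x$, the idempotents $c+x$ and $x+c$ are $\mathscr{D}^{+}$-related, hence $\mathscr{J}^{*+}$-related, hence $\mathscr{H}^{*+}$-related by hypothesis, and an $\mathscr{H}^{+}$-class contains at most one idempotent, so $c+x=x+c$.

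The obstacle you flag at the end, however, is not a delicate point awaiting a clever argument: it cannot be closed, because the converse as stated is false. Take $S=\{e,f\}$ with $e+e=e$, $e+f=f+e=f+f=f$, and $xy=f$ for all $x,y\in S$. This is a semiring; every element is an additive idempotent, so $S$ is additively quasi regular, and all additive Green's relations, starred or unstarred, reduce to the identity relation, whence $\mathscr{H}^{*+}=\mathscr{J}^{*+}$. Yet $ne=e$ for all $n$ and $e(e+e)=e\cdot e=f\neq e=e+e$, so $e$ is not completely regular and $S$ is not quasi completely regular, hence not quasi completely inverse. This confirms your diagnosis precisely: the hypotheses see only $(S,+)$, while quasi complete regularity demands the multiplicative identity $na(na+x)=na+x$, which is independent of them. (Your hoped-for repair also invokes ``the band structure of $(S,\cdot)$,'' but no such hypothesis exists anywhere in the paper.) You should not feel you missed the paper's idea here: the paper's entire treatment of this direction is the phrase ``and conversely,'' and it never confronts the multiplicative clause either. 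What your argument genuinely establishes is the corrected statement that a \emph{quasi completely regular} semiring is quasi completely inverse if and only if $\mathscr{H}^{*+}=\mathscr{J}^{*+}$: under that standing hypothesis, Theorem~\ref{2}, your collapse argument, and Theorem~\ref{3}(iv) complete the converse exactly as you outline.
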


\begin{proof}
Let $S$ is a quasi completely inverse semiring. Then by Theorem \ref{3}, $S$ is b-lattice of quasi skew-rings which are the $\mathcal{H}^{*+}$-classes of $S$. Also from \cite[Corollary 3.21.]{maity2} it follows that each $\mathcal{H}^{*+}$-class is a completely Archimedean semiring with exactly one additive idempotent, i.e. a $\mathcal{J}^{*+}$-class with exactly one additive idempotent and conversely. 
\end{proof}

\begin{theorem}\label{6} 
Let $S$ be a quasi completely inverse semiring. Then $\mathscr{H}^{*+}$ is the greatest additive idempotent separating congruence on $S$. 
\end{theorem}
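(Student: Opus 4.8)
The statement asserts three things about $\mathscr{H}^{*+}$: that it is a congruence, that it is additive idempotent-separating, and that it is the largest congruence with the latter property. The plan is to dispatch the first two quickly from the structure theory already in hand and then concentrate on maximality. For the congruence property I would invoke Theorem \ref{5}: since $S$ is quasi completely inverse, $\mathscr{H}^{*+} = \mathscr{J}^{*+}$, and by Theorem \ref{1}(iv) the relation $\mathscr{J}^{*+}$ is precisely the b-lattice congruence whose classes are the completely Archimedean components; hence $\mathscr{H}^{*+}$ is a congruence and its classes are exactly the quasi skew-rings appearing in Theorem \ref{3}(v). For additive idempotent-separation, recall by Theorem \ref{17} that each such class, being a quasi skew-ring, contains exactly one additive idempotent; thus distinct additive idempotents lie in distinct classes, i.e. ${\mathscr{H}^{*+}}\big|_{E^{+}(S)} = \epsilon$.

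The core is maximality: given any additive idempotent-separating congruence $\rho$, I must show $\rho \subseteq \mathscr{H}^{*+}$. Because the $\mathscr{H}^{*+}$-classes are the quasi skew-rings and each carries a unique additive idempotent, it suffices to prove that $a\,\rho\,b$ forces $a$ and $b$ into the same class; and for this it is enough to show the two associated additive idempotents are $\rho$-related, since $\rho$ separates idempotents and a class is determined by its idempotent (the assignment class $\mapsto$ its idempotent being injective). So from $a\,\rho\,b$ I would first pass to a common positive integer $n$ for which both $na$ and $nb$ are additively completely regular — such $n$ exists by quasi complete regularity, taking the product of the individual exponents and using that multiples of an additively completely regular element remain additively completely regular — and note $na\,\rho\,nb$ since $\rho$ respects addition. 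Writing $x$ and $y$ for the unique additive completely regular inverses of $na$ and $nb$, the relevant idempotents are $na+x$ and $nb+y$ (idempotent because $x = x + na + x$), and the goal reduces to $na + x \;\rho\; nb + y$.

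The main obstacle, and the only real content, is to show that inversion $na \mapsto x$ is compatible with $\rho$, i.e. $na\,\rho\,nb \Rightarrow x\,\rho\,y$. Here I would exploit the uniqueness of additive completely regular inverses recorded in the preliminaries. Applying the natural epimorphism $\rho^{\#}: S \to S/\rho$ to the defining identities $na + x + na = na$ and $na + x = x + na$ shows that the image of $x$ is an additive completely regular inverse of the image of $na$, and likewise the image of $y$ is one for the image of $nb$; since $na\,\rho\,nb$ forces these images to be equal and such an inverse is unique, the images of $x$ and $y$ coincide, that is $x\,\rho\,y$. Adding then gives $na + x \;\rho\; nb + y$, so the two idempotents are $\rho$-related, hence equal, hence $a$ and $b$ lie in a single quasi skew-ring and $a\,\mathscr{H}^{*+}\,b$. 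This yields $\rho \subseteq \mathscr{H}^{*+}$ and completes the argument.

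I expect the delicate points to be administrative rather than conceptual: securing a single $n$ that works simultaneously for $a$ and $b$, and confirming that $na + x$ genuinely is \emph{the} additive idempotent of the class containing $a$, so that equality of idempotents truly pins down the class. The inversion-compatibility step, which in the purely regular theory is the substantive part, becomes routine here precisely because the additive completely regular inverse is already known to be unique, so that its preservation under any homomorphism is automatic.
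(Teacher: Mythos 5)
Your argument is sound, but it cannot be compared step-for-step with the paper's proof for a simple reason: the paper offers no proof at all here --- its entire justification is the citation ``Follows from \cite[Theorem 4.10]{maity4}.'' Your proposal therefore does something genuinely different: it supplies a self-contained argument from results the paper does state. The skeleton is right: Theorem \ref{5} (together with Theorem \ref{3}(v)) identifies the $\mathscr{H}^{*+}$-classes with the quasi skew-rings of the b-lattice decomposition, so $\mathscr{H}^{*+}$ is a congruence; Theorem \ref{17} gives exactly one additive idempotent per class, so it separates additive idempotents; and maximality follows by pushing the defining identities of the completely regular inverse through $\rho^{\#}$ and invoking uniqueness in $S/\rho$. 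This is the standard argument for theorems of this type (it is how the analogous semigroup results are proved, and is presumably the content of the cited Theorem 4.10), and its payoff is that it makes the present paper self-contained. Two of the ``administrative'' points you flagged do need to be written out, since they are where the only real care lies. First, when you apply $\rho^{\#}$ you must transport \emph{all three} identities $na+x+na=na$, $na+x=x+na$, $x+na+x=x$ into the quotient: uniqueness genuinely fails if only the first two are imposed (in a b-lattice, every $x$ with $a+x=a$ satisfies them), so despite the looser phrasing in the paper's preliminaries, the third identity is not optional --- you do record it, but it must figure in the uniqueness appeal. Second, the identification of $na+x$ with the unique additive idempotent $e_{\alpha}$ of the class $T_{\alpha}$ containing $a$ requires two facts: $T_{\alpha}$ is itself a semiring by Theorem \ref{1}(ii), so $na\in T_{\alpha}$; and $Reg^{+}(S)=\bigcup_{\alpha}R_{\alpha}$ (which the paper extracts from Theorem \ref{2} in the converse half of Theorem \ref{13}), so $na$ lies in the skew-ring kernel $R_{\alpha}$, whence its group inverse in $(R_{\alpha},+)$ is, by uniqueness, your $x$, and $na+x=e_{\alpha}$. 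With those two details filled in, your proof is complete and correct.
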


\begin{proof}
Follows from \cite[Theorem 4.10.]{maity4}. 
\end{proof}

\section{Strongly Additively Quasi Completely Inverse Semirings}

There are two explicit definitions of inverses on semigroups, $\pi$-inverse semigroups and strongly  $\pi$-inverse semigroups. In \cite{bogda1} it has been illustrated that a semigroup is strongly  $\pi$-inverse if and only if it is $\pi$-inverse and the product of any two idempotents of the semigroup is an idempotent. In this section we derive analogous results pertaining to the above ideas.  

\begin{definition}\cite{maity4} \label{7}
A semiring $S$ is strongly additively quasi inverse semiring if $S$ is additively
quasi regular and the additive idempotent elements in $S$ commute, i.e. for any two elements $e,f \in E^{+}(S)$, $e+f=f+e$. 
\end{definition}

\begin{definition} \cite{maity4} \label{8}
A semiring $S$ is strongly additively quasi completely inverse semiring if $S$ is quasi completely regular and strongly additively quasi inverse semiring. 
\end{definition}

\begin{theorem}\label{9}
The following conditions on a semiring
$(S, +, \cdot)$ are equivalent:

(i) $S$ is a strongly additively quasi completely inverse semiring,

(ii) $S$ is quasi completely regular and $Reg^{+}(S)$ is an additive inverse subsemiring of $S$.

(iii) $S$ is quasi completely inverse and for any two elements $e,f \in E^{+}(S)$, $e+f\in E^{+}(S)$.
\end{theorem}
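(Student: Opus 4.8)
The plan is to prove the cycle by establishing $(ii)\Rightarrow(i)$, $(i)\Rightarrow(ii)$, $(i)\Rightarrow(iii)$ and $(iii)\Rightarrow(i)$, exploiting that all three conditions already assume $S$ to be quasi completely regular, so that the real work concerns only the additive reduct $(S,+)$ and the set $E^{+}(S)$. Throughout I would use Theorem \ref{2} to move freely between ``additively regular'' and ``completely regular'', identifying $Reg^{+}(S)=Cr(S)$, and I would record the elementary observation that for an additively regular $a$ with additive inverse $a'$ the element $e=a+a'$ is additively idempotent and satisfies $e+a=a+a'+a=a$.

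First, $(ii)\Rightarrow(i)$ should be immediate. The idempotents of the inverse semigroup $(Reg^{+}(S),+)$ are exactly the additive idempotents of $S$ lying in $Reg^{+}(S)$; since every element of $E^{+}(S)$ is additively regular, this idempotent set is precisely $E^{+}(S)$. As idempotents commute in any inverse semigroup, $E^{+}(S)$ commutes, and together with additive quasi regularity (inherited from quasi complete regularity) this is exactly the definition of (i).

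For $(i)\Rightarrow(ii)$ I must verify that $Reg^{+}(S)$ is a subsemiring whose additive reduct is inverse. Multiplicative closure is the step one might fear, but it is forced by distributivity alone: for $a\in Reg^{+}(S)$ with additive inverse $a'$, $e=a+a'$, and arbitrary $b\in S$, applying distributivity to $(a+a')b$ and to $(e+a)b=ab$ gives $ab+a'b+ab=ab$, so $ab\in Reg^{+}(S)$; thus $Reg^{+}(S)\cdot S\subseteq Reg^{+}(S)$ and symmetrically $S\cdot Reg^{+}(S)\subseteq Reg^{+}(S)$. Additive closure is where the hypothesis is actually used: for $a,b\in Reg^{+}(S)=Cr(S)$ with their commuting additive inverses, the standard inverse-semigroup computation $(a+b)+(b'+a')+(a+b)=a+b$ goes through precisely because the additive idempotents $a+a'$ and $b+b'$ commute. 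Since every element of $Reg^{+}(S)$ is additively regular and $E^{+}(S)$ commutes, $(Reg^{+}(S),+)$ is a regular semigroup with commuting idempotents, hence inverse, yielding (ii).

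Finally I would close the loop through (iii). For $(i)\Rightarrow(iii)$, commuting idempotents give $(e+f)+(e+f)=e+e+f+f=e+f$, so $e+f\in E^{+}(S)$, while strong additive quasi-inverseness forces uniqueness of additive inverses and hence makes $S$ additively quasi inverse, so $S$ is quasi completely inverse. For $(iii)\Rightarrow(i)$ the key observation is that, under the hypothesis $e+f\in E^{+}(S)$ (so also $f+e\in E^{+}(S)$), both $e+f$ and $f+e$ are additive inverses of the additively regular element $e+f$: a short computation using only $e+e=e$, $f+f=f$ and the idempotency of $e+f$ and of $f+e$ verifies the two defining relations. Uniqueness of additive inverses, available because $S$ is additively quasi inverse, then forces $e+f=f+e$, so $E^{+}(S)$ commutes and (i) follows. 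I expect the main delicate points to be this final uniqueness argument and the verification that the additive-closure computation in $(i)\Rightarrow(ii)$ genuinely needs, and only needs, commutativity of $E^{+}(S)$; the multiplicative closure, by contrast, is a free consequence of distributivity.
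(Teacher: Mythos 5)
Your proof is correct; every implication you sketch goes through, and the key computations ($ab+a'b+ab=(a+a'+a)b=ab$ by distributivity; $a+b+b'+a'+a+b=a+b$ using commuting additive idempotents; $e+f$ and $f+e$ both being additive inverses of $e+f$) are all valid. But your route is genuinely more self-contained than the paper's, whose proof is largely an assembly of citations: for (i)$\Leftrightarrow$(ii) the paper simply invokes Theorem 3.9 of the earlier Maity--Ghosh paper on quasi completely inverse semirings, where you prove both directions outright --- in particular your observation that multiplicative closure of $Reg^{+}(S)$ is forced by distributivity alone, with no hypothesis on idempotents, is a point the paper never makes explicit; for (i)$\Rightarrow$(iii) the paper passes through its Theorem \ref{3} (taking $m=1$ in the criterion $m(e+f)=m(f+e)$), while you re-derive additive quasi-inverseness from scratch via uniqueness of inverses of $na$ in the presence of commuting idempotents; and for (iii)$\Rightarrow$(i) the paper shows that $E^{+}(S)$ is an additive inverse subsemiring and then cites Howie's Theorem V.1.2 to conclude that the idempotents commute, whereas your argument --- both $e+f$ and $f+e$ invert the idempotent $e+f$, so uniqueness of additive inverses forces $e+f=f+e$ --- is exactly the kernel of that citation, carried out by hand. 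What the paper's approach buys is brevity and continuity with its prior structure theory; what yours buys is transparency: it shows the theorem is really a statement about the additive reduct and isolates precisely where commutativity of $E^{+}(S)$ and uniqueness of inverses are used. The one place you should expand before this could stand as a written proof is the clause ``strong additive quasi-inverseness forces uniqueness of additive inverses'' in (i)$\Rightarrow$(iii): as stated it is an assertion rather than an argument, and it needs the standard computation that if $x$ and $y$ are both additive inverses of the additively regular element $na$, then, since the idempotents $na+x$, $x+na$, $na+y$, $y+na$ all commute, $x=x+na+x=y+na+x=\dots=y$.
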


\begin{proof}

$(i)\Longrightarrow (ii):$ Follows from \cite[Theorem 3.9]{maity4}.

$(ii)\Longrightarrow (i):$ Follows from \cite[Theorem 3.9]{maity4}.

$(i)\Longrightarrow (iii):$ Let $S$ be a strongly additively quasi completely inverse semiring. Then $S$ is quasi completely regular and strongly additively quasi inverse semiring, which further implies that $S$ is quasi completely regular and for any two elements $e,f \in E^{+}(S)$, $e+f=f+e$. Thus, from Theorem \ref{3} it follows that $S$ is quasi completely inverse semiring. 
Clearly, $e+f\in E^{+}(S)$ for any $e,f \in E^{+}(S)$.

$(iii)\Longrightarrow (i):$ $S$ is quasi completely inverse and for any two elements $e,f \in E^{+}(S)$, $e+f\in E^{+}(S)$. Clearly, $ef \in E^{+}(S)$. This implies $E^{+}(S)$ is a additive inverse subsemiring of $S$. Thus, from \cite[Theorem V.1.2]{howie}, it follows that the additive idempotent elements of $S$ commute. Thus $S$ is a strongly additively quasi completely inverse semiring.
\end{proof}

\begin{cor}\label{10}
  A semiring $S$ is a strongly additively quasi completely inverse semiring if and only if $S$ is a quasi completely inverse semiring with $Reg^{+}(S)$ and $E^{+}(S)$ being ideals of $S$.   
\end{cor}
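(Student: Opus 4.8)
The plan is to deduce the corollary directly from Theorem \ref{9}, the guiding observation being that for both $Reg^{+}(S)$ and $E^{+}(S)$ the two-sided multiplicative absorption demanded of an ideal comes for free from distributivity, so that the only substantive content of the statement is closure under addition, which is exactly what the ``strong'' hypothesis governs. Thus I would treat the corollary as an ideal-theoretic reformulation of the equivalence in Theorem \ref{9} rather than as a genuinely new computation.

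For the forward implication I would assume $S$ is strongly additively quasi completely inverse. By Theorem \ref{9}, condition $(i)$ gives $(iii)$, whence $S$ is quasi completely inverse and $e+f\in E^{+}(S)$ for all $e,f\in E^{+}(S)$; likewise $(i)$ gives $(ii)$, so $Reg^{+}(S)$ is an additive inverse subsemiring and in particular closed under addition. (Both sets are nonempty: for quasi completely regular $S$ and any $a$ with $na+x+na=na$, $na+x=x+na$, the element $na+x$ is an additive idempotent.) It then remains to verify multiplicative absorption. For $e\in E^{+}(S)$ and $x\in S$, distributivity gives $xe+xe=x(e+e)=xe$ and $ex+ex=(e+e)x=ex$, so $xe,ex\in E^{+}(S)$; and for $a\in Reg^{+}(S)$, choosing $a'$ with $a+a'+a=a$, distributivity gives $xa+xa'+xa=x(a+a'+a)=xa$ and $ax+a'x+ax=ax$, so $xa,ax\in Reg^{+}(S)$. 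Combined with closure under addition, this shows both $Reg^{+}(S)$ and $E^{+}(S)$ are ideals of $S$.

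For the converse I would assume $S$ is quasi completely inverse with $Reg^{+}(S)$ and $E^{+}(S)$ ideals. Since every ideal is closed under addition, the assumption that $E^{+}(S)$ is an ideal yields $e+f\in E^{+}(S)$ for all $e,f\in E^{+}(S)$, which is precisely condition $(iii)$ of Theorem \ref{9}; applying the implication $(iii)\Rightarrow(i)$ there gives that $S$ is strongly additively quasi completely inverse.

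I expect no genuine obstacle in this argument: the single point worth flagging is that the multiplicative closure of $Reg^{+}(S)$ and $E^{+}(S)$ never invokes the strong hypothesis at all, since it holds in \emph{any} semiring by distributivity, so the whole weight of the corollary sits on the additive closure supplied by Theorem \ref{9}. A mild asymmetry to acknowledge is that the converse uses only the $E^{+}(S)$-ideal hypothesis; the $Reg^{+}(S)$-ideal hypothesis is carried along so that the two conditions read symmetrically and the equivalence is clean in both directions.
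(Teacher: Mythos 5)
Your proof is correct and is essentially the paper's own argument: the corollary is stated there without an explicit proof precisely because it is meant to follow from Theorem \ref{9} together with the observation that multiplicative absorption of $E^{+}(S)$ and $Reg^{+}(S)$ is automatic from distributivity ($xe+xe=x(e+e)=xe$, $xa+xa'+xa=x(a+a'+a)=xa$), which is exactly the reduction you carry out. Your added details---non-emptiness of $E^{+}(S)$ via $na+x$, and the fact that the converse needs only the $E^{+}(S)$ hypothesis to invoke $(iii)\Rightarrow(i)$ of Theorem \ref{9}---are the right ones and close the gaps the paper leaves implicit.
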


\begin{definition} \label{11}
Let $S$ be a strongly additively quasi completely inverse semiring. Then $S$ is quasi completely inverse semiring where $e,f \in E^{+}(S)$ implies $e+f \in E^{+}(S)$. Also $S$ is a b-lattice of quasi skew-rings.
Let us denote $S$ by $S=\bigcup_{\alpha \in Y} T_{\alpha}$, where $Y$ is a b-lattice of quasi skew-rings $T_{\alpha}$ for each $\alpha \in Y$. Let $T_{\alpha}=R_{\alpha} \cup S_{\alpha}$, where $R_{\alpha}$ is the skew-ring kernel of $T_{\alpha}$, and the identity of $R_{\alpha}$ is denoted by $e_{\alpha}$ for any $\alpha \in Y$. $S_{\alpha}$ = $T_{\alpha}\setminus R_{\alpha}$ is the set of non-additively regular elements of $T_{\alpha}$ and $S_{\alpha}$ is a partial semiring by the definition of quasi skew-ring.

$(i)$ Define a mapping $\psi : S \longrightarrow Reg^{+}(S)$ by $\psi(a)=a+e_{\alpha}$, for any $a \in S$, where $e_{\alpha}$ is the unique additive idempotent of $T_{\alpha}$. Then $\psi  {\big{|} _{R_{\alpha}}}$ = $I_{R_{\alpha}}$, $I_{R_{\alpha}}$ being the identity mapping on $R_{\alpha}$.

$(ii)$ Define a relation $\widetilde{\psi}$ for any $a,b \in S$ as follows:

\begin{center}
  $a \,\,\widetilde{\psi}\,\,b$ if and only if $\psi(a)=\psi(b)$.    
\end{center}
   
Clearly, $\widetilde{\psi}{\big{|}_{Reg^{+}(S)}}=\epsilon$. Also, $a \,\,\widetilde{\psi}\,\,(a+e_{\alpha})$ for any $a \in T_{\alpha}$. $\widetilde{\psi}{\big{|}_{S}}=\epsilon$ if and only if $S$ is completely regular semiring where every additive idempotent element commute. In general, $\widetilde{\psi}{\big{|}_{S \setminus Reg^{+}(S)}} \neq \epsilon$. The following example justifies our claim.

\end{definition}

\begin{example}
Let $S=\{0,a,b\}$ be a semiring with the following Cayley tables:

\begin{table}[ht] \begin{center}
\begin{tabular}{c|ccc}
+   & $a$ & $b$ & $0$ \\ \hline
$a$ & $b$ & $0$ & $0$ \\
$b$ & $0$ & $0$ & $0$ \\
$0$ & $0$ & $0$ & $0$ \\
\end{tabular}
\qquad \qquad
\begin{tabular}{c|ccc}
$\cdot$ & $a$ & $b$ & $0$ \\ \hline
    $a$ & $b$ & $0$ & $0$ \\
    $b$ & $0$ & $0$ & $0$ \\
    $0$ & $0$ & $0$ & $0$ \\
\end{tabular}
\end{center}
\end{table}

Clearly, $S$ is a quasi skew-ring and $Reg^{+}(S)=\{0\}$ and $\psi(a)=a+0=0=b+0=\psi(b)$, but $a \neq b$.
\end{example}

\begin{remark}\label{rem1}
Let $S=\bigcup_{\alpha \in Y} T_{\alpha}$, where $Y$ is a b-lattice of quasi skew-rings $T_{\alpha}$ for each $\alpha \in Y$, be a strongly additively quasi completely inverse semiring. Let $a \in T_{\alpha}$, $b \in T_{\beta}$. Then, $a \, \mathscr{H^{*+}} \, \alpha$ and $b \, \mathscr{H^{*+}} \, \beta$ which implies $(a+b) \, \mathscr{H^{*+}} \, (\alpha+\beta)$ and $ab \, \mathscr{H^{*+}} \, \alpha \beta$. This further implies that $(a+b) \in T_{\alpha + \beta}$, $ab \in T_{\alpha \beta}$.    \end{remark}

\begin{theorem}\label{12}
Let $S$ be a strongly additively quasi completely inverse semiring. Then $\psi$ is a homomorphism. 
\end{theorem}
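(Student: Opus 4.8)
The plan is to verify the two homomorphism conditions $\psi(a+b)=\psi(a)+\psi(b)$ and $\psi(ab)=\psi(a)\psi(b)$ directly for $a\in T_\alpha$, $b\in T_\beta$, after first pinning down how the family of additive idempotents $\{e_\gamma:\gamma\in Y\}$ behaves under the two operations. The two facts I would isolate at the outset are $e_\alpha+e_\beta=e_{\alpha+\beta}$ and $e_\alpha e_\beta=e_{\alpha\beta}$. Each is obtained the same way: since $S$ is strongly additively quasi completely inverse, Theorem \ref{9}(iii) (together with the observation in its proof that $E^{+}(S)$ is closed under products) gives $e_\alpha+e_\beta,\,e_\alpha e_\beta\in E^{+}(S)$; by Remark \ref{rem1} these lie in $T_{\alpha+\beta}$ and $T_{\alpha\beta}$ respectively; and since each quasi skew-ring $T_\gamma$ has a \emph{unique} additive idempotent $e_\gamma$ by Theorem \ref{17}, the two elements must coincide with $e_{\alpha+\beta}$ and $e_{\alpha\beta}$.

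For the multiplicative identity I would exploit a sharper version of this uniqueness trick applied to the cross terms. Because $e_\beta+e_\beta=e_\beta$, right distributivity gives $a e_\beta + a e_\beta = a(e_\beta+e_\beta)=a e_\beta$, so $a e_\beta\in E^{+}(S)$; symmetrically $e_\alpha b\in E^{+}(S)$. Since $a e_\beta,\,e_\alpha b\in T_{\alpha\beta}$ by Remark \ref{rem1}, uniqueness of the additive idempotent forces $a e_\beta=e_\alpha b=e_{\alpha\beta}$. Expanding then collapses everything: $\psi(a)\psi(b)=(a+e_\alpha)(b+e_\beta)=ab+a e_\beta+e_\alpha b+e_\alpha e_\beta=ab+e_{\alpha\beta}+e_{\alpha\beta}+e_{\alpha\beta}=ab+e_{\alpha\beta}=\psi(ab)$, where the final step uses idempotency of $e_{\alpha\beta}$ and $ab\in T_{\alpha\beta}$.

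For the additive identity the distributivity trick is unavailable, so I would instead use that $\psi$ takes values in $Reg^{+}(S)$. By Theorem \ref{9}(ii) the additive reduct $(Reg^{+}(S),+)$ is an inverse semigroup, while by Theorem \ref{2} every element of $Reg^{+}(S)$ is completely regular, so $(Reg^{+}(S),+)$ is simultaneously inverse and a union of groups, i.e. a Clifford semigroup; hence its idempotents $\{e_\gamma\}$ are central in $Reg^{+}(S)$ (see \cite{howie}). Since $\psi(b)=b+e_\beta\in Reg^{+}(S)$, centrality of $e_\alpha$ yields $e_\alpha+(b+e_\beta)=(b+e_\beta)+e_\alpha$, and therefore $\psi(a)+\psi(b)=a+e_\alpha+b+e_\beta=a+(b+e_\beta)+e_\alpha=a+b+e_\beta+e_\alpha=(a+b)+e_{\alpha+\beta}=\psi(a+b)$, using commutativity of additive idempotents and $e_\alpha+e_\beta=e_{\alpha+\beta}$.

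The step I expect to be the main obstacle is the additive one: unlike the product, the sum $e_\alpha+b$ need not be an additive idempotent, so it cannot be reduced by the uniqueness argument, and one is forced to justify that additive idempotents are central. The crux is thus the structural fact that $(Reg^{+}(S),+)$ is a Clifford semigroup, which rests on combining Theorem \ref{9}(ii) with Theorem \ref{2}; once centrality is secured the computation is routine. A secondary point I would check carefully is that each manipulation keeps the intermediate elements inside the correct quasi skew-ring via Remark \ref{rem1}, so that the uniqueness of additive idempotents (Theorem \ref{17}) may legitimately be invoked.
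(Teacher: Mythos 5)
Your proposal is correct, and its skeleton is the same as the paper's: verify both identities directly, use Remark \ref{rem1} to place each sum and product in the correct component $T_{\alpha+\beta}$ or $T_{\alpha\beta}$, and use the uniqueness of the additive idempotent of a quasi skew-ring (Theorem \ref{17}) to collapse the cross terms. Your multiplicative half coincides with the paper's computation, which likewise expands $(a+e_\alpha)(b+e_\beta)=ab+ae_\beta+e_\alpha b+e_\alpha e_\beta$ and identifies each cross term with $e_{\alpha\beta}$. The genuine divergence is in the additive half. The paper's chain $(a+e_\alpha)+(b+e_\beta)=a+e_\alpha+e_\beta+b=a+e_{\alpha+\beta}+b=a+b+e_{\alpha+\beta}$ twice commutes an additive idempotent past the arbitrary, possibly non-regular element $b$ (first $e_\beta$, then the cross-component idempotent $e_{\alpha+\beta}$) and offers no justification; these commutations are in fact true, but proving them requires an argument of essentially the same depth as the lemma you invoke (e.g., showing that $b+e_\gamma$ and $e_\gamma+b$ both land in the skew-ring $R_\gamma$ and then sandwiching both with $e_\gamma$). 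You sidestep the issue entirely by commuting $e_\alpha$ only past the additively regular element $b+e_\beta=\psi(b)$, which is licensed by your structural observation that $(Reg^{+}(S),+)$ is a Clifford semigroup --- inverse by Theorem \ref{9}(ii), a union of groups by Theorem \ref{2} --- so that its idempotents are central. Thus your route buys a fully self-contained additive step at the cost of one extra lemma that follows from results already quoted in the paper, whereas the paper's version is terser but leaves its two commutation steps implicit.
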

  
\begin{proof}
Let $a,b \in S$ and $a \in T_{\alpha}$, $b \in T_{\beta}$ for $\alpha, \beta \in Y$. Then $\psi(a)=a+e_{\alpha}$, and $\psi(b)=b+e_{\beta}$. $\psi(a+b)=a+b+e_{\alpha+\beta}$, $\psi(a)+\psi(b)=(a+e_{\alpha})+(b+e_{\beta})=a+e_{\alpha}+e_{\beta}+b=a+e_{\alpha+\beta}+b=a+b+e_{\alpha+\beta}$, which implies $\psi(a+b)=\psi(a)+\psi(b)$. Again, $\psi(ab)=ab+e_{\alpha \beta}$, $\psi(a) \psi(b)= (a+e_{\alpha})(b+e_{\beta})=ab+ae_{\beta}+e_{\alpha}b+e_{\alpha}e_{\beta}=ab+e_{\alpha \beta}$, which implies $\psi(ab)= \psi(a) \psi(b)$. This proves that $\psi$ is a homomorphism.
\end{proof}

\section{Strong b-lattice of quasi skew-rings}
In \cite{zhangmain} the authors derived the necessary and sufficient conditions for a GV-inverse semigroup to be a strong semillatice of $\pi$-groups. In \cite{sen1} the authors defined strong b-lattice of semirings analogous to the structure of strong semilattice of semigroups where they used b-lattice instead of semilattice. Using this definition they also established that a semirig is a generalized Clifford semiring if and only if it is a strong b-lattice of skew-rings. In this section we use the definition of strong b-lattice of semirings to prove our main result.
Before stating the main result of this paper, let us recall some established important definitions and results from \cite{sen1}.

\begin{definition} \label{14} \cite{sen1}
A semiring $(S,+,\cdot)$ is called a generalized Clifford semiring if it is a completely regular semiring where $(S,+)$ is inverse semigroup  and $E^{+}(S)$ is a k-ideal of $S$.
\end{definition}

\begin{definition}\label{15} \cite{sen1}
Let $T$ be  b-lattice and $\{S_{\alpha} : \alpha \in T \}$  be a family of pairwise disjoint semirings which are indexed by the elements of $T$. For each $\alpha \leq \beta $ in $T$, we now embed $ S_{\alpha} $ in  $S_{\beta}$ via a semiring monomorphism $\phi_{_{\alpha,\beta}}$ satisfying the
following conditions

\noindent
(1)   
\begin{center}
$\phi_{_{\alpha,\alpha}} = I_{S_{\alpha}}$, the identity mapping on $  S_{\alpha}$,
\end{center}

\noindent
(2)   
\vspace{-3em}
\begin{center}
$\phi_{_{\alpha,\beta}} \phi_{_{\beta,\gamma}} = \phi_{_{\alpha,\gamma}}$  \hspace{3em} if  $\alpha\leq\beta\leq\gamma$,
\end{center}
\noindent
(3) 
\vspace{-2.7em}
\begin{center}
$S_{\alpha} \phi_{_{\alpha,\gamma}}S_{\beta} \phi_{_{\beta,\gamma}}  \subseteq S_{\alpha\beta} \phi_{_{\alpha\beta,\gamma}}$ \hspace{.5em} if $ \alpha+\beta \leq
\gamma$, i.e., $ \alpha+\beta + \alpha\beta \leq
\gamma$ .
\end{center}

On  $ S = \displaystyle{\bigcup_{\alpha \in T}S_\alpha}$ we define  addition `\,$+$' and multiplication `\,$\cdot$' for $ a \in S_\alpha,$ $b \in S_\beta $, as  follows:

\noindent
(4) 
\vspace{-3em}
\begin{center}
$ a + b = a \phi_{_{\alpha,\alpha+\beta}}+b  \phi_{_{\beta,\alpha+\beta}}$ 
\end{center}

\noindent
and

\noindent
(5) 
\vspace{-2.6em}
\begin{center}
$a\cdot b = c \in S_{\alpha\beta} $ such  that $ c\phi_{_{\alpha\beta,\alpha+\beta}} = a\phi_{_{\alpha,\alpha+\beta}}\cdot b\phi_{_{\beta,\alpha+\beta}}$
\end{center}

\noindent We denote the above system by $ S = < T, S_\alpha, \phi_{_{\alpha,\beta}}> $ and call it the 
strong b-lattice \index{strong b-lattice of semirings} $T$ of the semirings $ S_\alpha, \alpha \in T$. Also, the system $ S = < T, S_\alpha, \phi_{_{\alpha,\beta}}> $ is a semiring.
\end{definition}

\begin{theorem} \label{16} \cite{sen1}
A semiring $S$ is a generalized Clifford semiring if and only if $S$ is a strong b-lattice of skew-rings.
\end{theorem}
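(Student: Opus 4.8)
The plan is to prove the two implications separately, with the Clifford structure of the additive reduct $(S,+)$ and the k-ideal hypothesis of Definition~\ref{14} as the two load-bearing ingredients. Throughout I would use the characterization of completely regular semirings (a semiring is completely regular if and only if it is a union of skew-rings if and only if it is a b-lattice of completely simple semirings) recalled in the introduction.

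For the direction ``strong b-lattice of skew-rings $\Rightarrow$ generalized Clifford semiring'', I would start from $S=<T,S_\alpha,\phi_{_{\alpha,\beta}}>$ as in Definition~\ref{15}. First I would check that the assignment $a\mapsto\alpha$ for $a\in S_\alpha$ is a b-lattice congruence with completely simple classes: each $S_\alpha$ is a skew-ring, hence completely regular and (its additive reduct being a group, so all elements $\mathscr{J}^{+}$-related) completely simple, while formulas (4) and (5) give $a+b\in S_{\alpha+\beta}$ and $ab\in S_{\alpha\beta}$; thus $S$ is a b-lattice $T$ of completely simple semirings and therefore completely regular. Next, the additive idempotents of $S$ are exactly the zeros $e_\alpha$ of the groups $(S_\alpha,+)$, and since the maps $\phi_{_{\alpha,\beta}}$ are monomorphisms they preserve zeros, so $e_\alpha+e_\beta=e_{\alpha+\beta}=e_\beta+e_\alpha$ because $(T,+)$ is a commutative idempotent semilattice; together with additive regularity (every element sits in an additive group) this makes $(S,+)$ inverse. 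Finally I would verify that $E^{+}(S)=\{e_\alpha:\alpha\in T\}$ is a k-ideal: closure and absorption are immediate, and the k-property $e_\alpha+x\in E^{+}(S)\Rightarrow x\in E^{+}(S)$ is obtained by writing $x\in S_\beta$, expanding $e_\alpha+x$ by (4) inside the group $S_{\alpha+\beta}$, cancelling the zero there, and invoking injectivity of $\phi_{_{\beta,\alpha+\beta}}$ to force $x=e_\beta$.

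For the converse I would begin with the completely regular structure, so $S$ is a b-lattice $Y$ of completely simple semirings $S_\alpha$; because $(S,+)$ is inverse its idempotents commute, hence each completely simple additive component is in fact a group, giving each $S_\alpha$ a unique additive idempotent $e_\alpha$ and making it a skew-ring. Taking $T=Y$, I would set $\phi_{_{\alpha,\beta}}(a)=a+e_\beta$ for $\alpha\le\beta$ (the order with $\alpha+\beta=\beta$), check that the image lies in $S_\beta$ and that $\phi_{_{\alpha,\beta}}$ is a semiring homomorphism using the fact that $(S,+)$, being completely regular and inverse, is an additive Clifford semigroup whose connecting maps are exactly $a\mapsto a+e_\beta$. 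I would then confirm conditions (1)--(3) and the reconstruction of $+$ and $\cdot$ through (4) and (5). The one genuinely new point is injectivity: I would show that if $a+e_\beta=a'+e_\beta$ with $a,a'\in S_\alpha$, then using the additive inverse of $a'$ and the k-ideal property of $E^{+}(S)$ one pulls the equality back into $S_\alpha$ and concludes $a=a'$, so each $\phi_{_{\alpha,\beta}}$ is a monomorphism rather than a mere homomorphism.

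The main obstacle, in both directions, is precisely the equivalence ``$E^{+}(S)$ is a k-ideal $\Longleftrightarrow$ the additive Clifford connecting maps are injective''. A completely regular semiring with inverse additive reduct is automatically a b-lattice of skew-rings whose connecting maps $a\mapsto a+e_\beta$ are only homomorphisms; the k-ideal condition is exactly the extra hypothesis that upgrades them to embeddings, which is what separates a \emph{strong} b-lattice from an ordinary one. I expect the bookkeeping for condition (3), balancing the band multiplication on $T$ against distributivity, to be the most calculation-heavy step, but conceptually routine once the injective embeddings are secured.
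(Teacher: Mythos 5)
The paper itself contains no proof of this statement to compare against: Theorem \ref{16} is imported verbatim from \cite{sen1} as background for the main Theorem \ref{13}. Judged on its own terms, your proposal is correct, and it is essentially the standard argument for this result. Your central observation --- that the k-ideal condition on $E^{+}(S)$ is exactly what makes the additive-Clifford connecting maps $a \mapsto a+e_{\beta}$ injective, and hence what upgrades an ordinary b-lattice decomposition into a \emph{strong} one --- is the right pivot, and both halves go through as you sketch them: in one direction, $e_{\alpha}+x \in E^{+}(S)$ with $x \in S_{\beta}$ collapses via formula (4) to $x\phi_{\beta,\alpha+\beta} = e_{\alpha+\beta} = e_{\beta}\phi_{\beta,\alpha+\beta}$, so injectivity forces $x = e_{\beta}$; in the other, $a+e_{\beta} = a'+e_{\beta}$ gives $(a+(-a'))+e_{\beta} \in E^{+}(S)$, the k-property puts $a+(-a')$ in $E^{+}(S)\cap S_{\alpha} = \{e_{\alpha}\}$, and the group structure of $(S_{\alpha},+)$ finishes. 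Two steps are stated more breezily than they deserve, though neither is a genuine gap. First, multiplicativity of $a \mapsto a+e_{\beta}$ does \emph{not} follow from the additive Clifford structure alone: one must expand $(a+e_{\beta})(b+e_{\beta}) = ab + ae_{\beta}+e_{\beta}b+e_{\beta}^{2}$, note each cross term is the unique additive idempotent of its component (using that $E^{+}(S)$ is an ideal), and invoke the b-lattice identities $\alpha\beta+\beta = \beta = \beta\alpha+\beta$ (valid for $\alpha \le \beta$ by distributivity in $Y$) to collapse the idempotent tail to $e_{\beta}$; the same computation is what settles condition (3) of Definition \ref{15}, as you anticipate. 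Second, in the forward direction the absorption half of the ideal property is not literally immediate: it requires that the zero of a skew-ring is multiplicatively absorbing (from $ey+ey=(e+e)y=ey$ one gets $ey=e$), read through formula (5) together with injectivity of $\phi_{\alpha\beta,\alpha+\beta}$. With those two computations written out, your outline is a complete proof, and it runs parallel to the machinery this paper itself deploys in proving Theorem \ref{13} --- the homomorphism $\psi : a \mapsto a+e_{\alpha}$, absorption into bi-ideals, and injectivity of the restricted structure maps.
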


\noindent
 Now we state the main result of this paper.

\begin{theorem}\label{13}
Let $S = \displaystyle{\bigcup_{\alpha \in Y}T_\alpha}$ be a strongly additively quasi completely inverse semiring. If the following conditions are satisfied:

    (i) $Reg^{+}(S)$ = $\displaystyle{\bigcup_{\alpha \in Y}R_\alpha}$ is a generalized Clifford subsemiring of $S$, denoted by $R<Y, R_\alpha, \theta_{_{\alpha,\beta}}> $, and it is a bi-ideal of $S$, i.e., $S$ is a nil-extension of generalized Clifford semiring.
    
    (ii) For any $\alpha, \beta \in Y$ with $\alpha \leq \beta$, if $S_{\alpha} \neq \phi$, there is a semiring monomorphism $\varphi_{_{\alpha,\beta}} : S_{\alpha} \longrightarrow T_{\beta}$ such that
    
    (1) $\varphi_{_{\alpha,\alpha}} = I_{S_{\alpha}}$, the identity mapping on $  S_{\alpha}$,

    (2) For any $\alpha, \beta, \gamma \in Y$ with $\alpha + \beta \leq \gamma$, if $a \in S_{\alpha}, b \in S_{\beta}$, $a+b \notin S_{\alpha+\beta}$ then $a\varphi_{_{\alpha,\gamma}} + b\varphi_{_{\beta,\gamma}} \notin S_{\gamma} $.

   (3) For any $\alpha, \beta, \gamma \in Y$ with $\alpha\leq\beta\leq\gamma$ and $a \in S_{\alpha}$, if $a\varphi_{_{\alpha,\beta}} \in S_{\beta}$, then $a\varphi_{_{\alpha,\beta}} \varphi_{_{\beta,\gamma}} = a\varphi_{_{\alpha,\gamma}}$. If $a\varphi_{_{\alpha,\beta}} \notin S_{\beta}$, then $a\varphi_{_{\alpha,\gamma}} \notin S_{\gamma}$,

    (4) For any $\alpha, \beta, \gamma \in Y$ with $\alpha + \beta \leq \gamma$ i.e. $\alpha + \beta+\alpha\beta \leq \gamma$,and $a \in S_{\alpha}, b \in S_{\beta}$, if $ab\in S_{\alpha\beta}$, then $a\varphi_{_{\alpha,\gamma}} b\varphi_{_{\beta,\gamma}}$ = $(ab)\varphi_{_{\alpha \beta,\gamma}}$ and if $ab \notin S_{\alpha \beta}$ then, $(a\varphi_{_{\alpha,\gamma}}) (b\varphi_{_{\beta,\gamma}}) \notin S_{\gamma} $,

    (5) For any $\alpha, \beta \in Y$ and $a \in S_{\alpha}, b \in S_{\beta}$ if $a+b\in S_{\alpha+\beta}$, $ab \in S_{\alpha \beta}$, then $a+b=a\varphi_{_{\alpha,\alpha+\beta}}+b\varphi_{_{\beta, \alpha+\beta}}$ and $(ab) \varphi_{_{\alpha\beta, \alpha+\beta}}=(a\varphi_{_{\alpha,\alpha+\beta}})(b\varphi_{_{\beta, \alpha+\beta}})$,

  (iii) For any $\alpha, \beta \in Y$ with $\alpha \leq \beta$, $\varphi_{_{\alpha, \beta}} \psi$ = $\psi \theta_{_{\alpha, \beta}}$, where $\psi$ is a homomorphism defined earlier.

Define a mapping $\Phi_{_{\alpha, \beta}} : T_{\alpha} \longrightarrow T_{\beta}$ for any $\alpha, \beta \in Y$ with $\alpha \leq \beta$ and $a \in T_{\alpha}$ as,

\begin{center}
 $a \Phi_{_{\alpha, \beta}}$ = $\Bigg\{$ $\begin{tabular}{cc}
  $ a \theta_{_{\alpha, \beta}}$, & $ a \in R_{\alpha}$ \\
$ a \varphi_{_{\alpha, \beta}}$, & $ a \in S_{\alpha}$ \\

 \end{tabular}$
\end{center}

Then $S$ is a strong b-lattice of quasi skew-rings, denoted by $S<Y, T_\alpha, \Phi_{_{\alpha,\beta}}> $ and conversely.
\end{theorem}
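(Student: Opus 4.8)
The plan is to verify directly that the family $\{\Phi_{\alpha,\beta} : \alpha \le \beta\}$ turns $S$ into a strong b-lattice of the quasi skew-rings $T_\alpha$ in the sense of Definition \ref{15}, and then to reverse each step for the converse. Throughout I would lean on three recurring tools: the retraction homomorphism $\psi$ of Theorem \ref{12}, which fixes $Reg^+(S)$ pointwise and sends each $T_\alpha$ into $R_\alpha$ via $\psi(a)=a+e_\alpha$; the fact that $R=Reg^+(S)$ is a bi-ideal (condition (i)), so that any sum or product involving a regular element is again regular and lands in the appropriate $R$-component; and the compatibility condition (iii), $\varphi_{\alpha,\beta}\psi=\psi\theta_{\alpha,\beta}$, which in expanded form reads $\psi(a\varphi_{\alpha,\beta})=\psi(a)\theta_{\alpha,\beta}$ for $a\in S_\alpha$. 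The single most useful consequence of (iii) is that whenever $\varphi_{\alpha,\beta}$ carries a non-regular element $a\in S_\alpha$ into the kernel $R_\beta$, its value is forced to be $a\varphi_{\alpha,\beta}=\psi(a)\theta_{\alpha,\beta}$; this is what lets the two ``halves'' $\theta$ and $\varphi$ of $\Phi$ be reconciled.

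First I would show each $\Phi_{\alpha,\beta}$ ($\alpha\le\beta$) is a well-defined semiring homomorphism. Well-definedness is immediate from the disjoint decomposition $T_\alpha=R_\alpha\sqcup S_\alpha$. To see it respects addition I would run the case analysis on the location of $x,y\in T_\alpha$ and of $x+y$: if both lie in $R_\alpha$ this is just that $\theta_{\alpha,\beta}$ is a homomorphism; if at least one lies in $R_\alpha$ then $x+y\in R_\alpha$ by the bi-ideal property and, after applying the retraction $\psi$ (the identity on the regular images) and invoking (iii), both sides collapse to $(x+y)\theta_{\alpha,\beta}$; if $x,y\in S_\alpha$ with $x+y\in S_\alpha$ the identity is exactly the partial-semiring homomorphism property of $\varphi_{\alpha,\beta}$, while if $x+y\in R_\alpha$ condition (ii)(2) (applied with both indices equal to $\alpha$, legitimate since $\alpha+\alpha=\alpha\le\beta$ in the b-lattice) guarantees the images sum to a regular element, after which $\psi$ and (iii) again reduce both sides to $(x+y)\theta_{\alpha,\beta}$. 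The multiplicative check is entirely parallel, using the bi-ideal property, (ii)(4) and the multiplicativity of $\psi,\theta,\varphi$. Injectivity on each of the two pieces is inherited from $\theta_{\alpha,\beta}$ and $\varphi_{\alpha,\beta}$.

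Next I would verify the three structural axioms of Definition \ref{15}. Axiom (1), $\Phi_{\alpha,\alpha}=I_{T_\alpha}$, is immediate from $\theta_{\alpha,\alpha}=I_{R_\alpha}$ and (ii)(1). For the composition law (axiom (2)) $\Phi_{\alpha,\beta}\Phi_{\beta,\gamma}=\Phi_{\alpha,\gamma}$ I would again split on whether a given $a$ lies in $R_\alpha$ (where it reduces to $\theta_{\alpha,\beta}\theta_{\beta,\gamma}=\theta_{\alpha,\gamma}$ from the generalized Clifford structure of $R$) or in $S_\alpha$; in the latter case the two sub-possibilities of (ii)(3) --- according to whether $a\varphi_{\alpha,\beta}$ stays in $S_\beta$ or falls into $R_\beta$ --- are handled respectively by the first clause of (ii)(3) and by the forced identity $a\varphi_{\alpha,\beta}=\psi(a)\theta_{\alpha,\beta}$ together with $\theta_{\alpha,\beta}\theta_{\beta,\gamma}=\theta_{\alpha,\gamma}$. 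Axiom (3), the containment $T_\alpha\Phi_{\alpha,\gamma}\,T_\beta\Phi_{\beta,\gamma}\subseteq T_{\alpha\beta}\Phi_{\alpha\beta,\gamma}$ for $\alpha+\beta\le\gamma$, I would deduce from the stronger multiplicative compatibility $(ab)\Phi_{\alpha\beta,\gamma}=(a\Phi_{\alpha,\gamma})(b\Phi_{\beta,\gamma})$, taking the witness $c=ab\in T_{\alpha\beta}$ (Remark \ref{rem1}); this compatibility follows from (ii)(4) on the $S$-parts, from Theorem \ref{16} applied to $R$ on the $R$-parts, and from the $\psi$-and-bi-ideal reduction on mixed parts. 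Finally the operations are reconstructed: with $\gamma=\alpha+\beta$, formula (4) of Definition \ref{15} is the additive compatibility $a+b=a\Phi_{\alpha,\alpha+\beta}+b\Phi_{\beta,\alpha+\beta}$ and formula (5) the corresponding multiplicative one, both of which drop out of (ii)(5), the generalized Clifford operations on $R$ (Theorem \ref{16}), and the now-familiar collapse via $\psi$, using that $\theta$ sends the zero $e_\beta$ of $R_\beta$ to the zero $e_{\alpha+\beta}$ of $R_{\alpha+\beta}$. For the converse I would set $\theta_{\alpha,\beta}=\Phi_{\alpha,\beta}|_{R_\alpha}$ and $\varphi_{\alpha,\beta}=\Phi_{\alpha,\beta}|_{S_\alpha}$ and read conditions (i), (ii) and (iii) off the strong b-lattice axioms for $\Phi$ together with Corollary \ref{10} (which makes $Reg^+(S)$ and $E^+(S)$ ideals) and Theorem \ref{16} (which identifies $Reg^+(S)$ as a generalized Clifford semiring).

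The main obstacle is the gluing in the middle step: $\Phi_{\alpha,\beta}$ is assembled from a monomorphism $\theta_{\alpha,\beta}$ defined on the skew-ring kernel and a monomorphism $\varphi_{\alpha,\beta}$ defined on the complementary non-regular (merely partial-semiring) part, and because addition and multiplication in $T_\alpha$ freely cross between $R_\alpha$ and $S_\alpha$ --- a sum of two non-regular elements may be regular --- one must show the two halves fit together into a single homomorphism no matter how the operands and their results are distributed. The delicate point throughout is the case in which $\varphi_{\alpha,\beta}$ pushes a non-regular element into the regular kernel $R_\beta$; there the only available handle is condition (iii), which pins the value to $\psi(a)\theta_{\alpha,\beta}$ and thereby forces consistency with the $R$-side maps. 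I would expect essentially all of the genuine work to be concentrated in verifying these mixed cases, with (ii)(2)--(ii)(5) each supplying exactly the clause needed to keep the bookkeeping closed.
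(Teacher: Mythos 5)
Your proposal is correct and follows essentially the same route as the paper: a case-by-case verification that $\Phi_{_{\alpha,\beta}}$ glues $\theta_{_{\alpha,\beta}}$ and $\varphi_{_{\alpha,\beta}}$ into a monomorphism satisfying the axioms of Definition \ref{15}, with the mixed and regular-image cases reconciled exactly as you describe --- via the retraction $\psi$, the bi-ideal property of $Reg^{+}(S)$, and condition (iii) forcing $a\varphi_{_{\alpha,\beta}}=a\psi\theta_{_{\alpha,\beta}}$ whenever $a\varphi_{_{\alpha,\beta}}$ lands in $R_{\beta}$ --- followed by the converse obtained by restricting $\Phi_{_{\alpha,\beta}}$ to $R_{\alpha}$ and $S_{\alpha}$. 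The paper's proof is just the fully expanded version of this plan (seven cases for the homomorphism property, then the identity, composition, containment, and operation axioms, each by the same collapses you outline).
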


\begin{proof}
Let $S = \displaystyle{\bigcup_{\alpha \in Y}T_\alpha}$ be a strongly additively quasi completely inverse semiring and let the given conditions are satisfied.

\noindent
To prove $S$ to be a strong b-lattice of quasi skew-rings, we firstly show that the mapping $\Phi_{_{\alpha, \beta}}$ is a monomorphism from $T_{\alpha}$ to $T_{\beta}$. 

\noindent
Let $a,b \in T_{\alpha}$ and $\alpha, \beta \in Y$ with $\alpha \leq \beta$. Since $T_{\alpha}=R_{\alpha} \cup S_{\alpha}$, where $R_{\alpha}$ is the skew-ring kernel and $S_{\alpha}$= $T_{\alpha}\setminus R_{\alpha}$, there arises several cases. 

\noindent
Case 1 : if $a \in S_{\alpha}$, $b \in S_{\alpha}$, $a+b, ab \in S_{\alpha}$, since $\varphi_{_{\alpha,\beta}}$ is a semiring homomorphism, 

\begin{center}
    $(a+b)\Phi_{_{\alpha,\beta}}$ = $(a+b)\varphi_{_{\alpha,\beta}}$ = $a\varphi_{_{\alpha,\beta}} + b\varphi_{_{\alpha, \beta}}$ = $a\Phi_{_{\alpha,\beta}} + b\Phi_{_{\alpha, \beta}}$.

    $(ab)\Phi_{_{\alpha,\beta}}$ = $(ab)\varphi_{_{\alpha,\beta}}$ = $(a\varphi_{_{\alpha,\beta}})(b\varphi_{_{\alpha, \beta}})$ = $(a\Phi_{_{\alpha,\beta}})(b\Phi_{_{\alpha, \beta}})$.
\end{center}

\noindent
Case 2 : if $a \in S_{\alpha}$, $b \in S_{\alpha}$, $a+b, ab \in R_{\alpha}$, the skew-ring kernel of $T_{\alpha}$, then

\begin{center}

\begin{tabular}{ccl}
    $(a+b)\Phi_{_{\alpha,\beta}}$\,\,  &  = & $(a+b+e_{\alpha})\Phi_{_{\alpha,\beta}}$ \\
     $ $ & = & $(a+b+e_{\alpha})\theta_{_{\alpha,\beta}}$ \\
     $ $ & = & $((a+e_{\alpha})+(b+e_{\alpha}))\theta_{_{\alpha,\beta}}$ \\
     $ $ & = & $(a+e_{\alpha})\theta_{_{\alpha,\beta}}+(b+e_{\alpha})\theta_{_{\alpha,\beta}}$ (by Condition (i)) \\
     $ $ & = & $a \psi\theta_{_{\alpha,\beta}} + b \psi\theta_{_{\alpha,\beta}}$ (by definition of $\psi$) \\
     $ $ & = & $a \varphi_{_{\alpha, \beta}} \psi$ + $b \varphi_{_{\alpha, \beta}} \psi$ (by Condition (iii))\\
     $ $ & = & $(a \varphi_{_{\alpha, \beta}} + e_{\beta})$ + $(b \varphi_{_{\alpha, \beta}} + e_{\beta})$ \\
     $ $ & = & $a \varphi_{_{\alpha, \beta}} + b \varphi_{_{\alpha, \beta}} + e_{\beta}$ \\
     $ $ & = & $a \varphi_{_{\alpha, \beta}} + b \varphi_{_{\alpha, \beta}}$ (by Condition (ii)(2))  \\
     $ $ & = & $a \Phi_{_{\alpha, \beta}} + b \Phi_{_{\alpha, \beta}}$.
\end{tabular}
  
\begin{tabular}{ccl}
    $(ab)\Phi_{_{\alpha,\beta}}$  &  = & $(ab+e_\alpha)\Phi_{_{\alpha,\beta}}$ \\
     $ $ & = & $(ab+e_{\alpha})\theta_{_{\alpha,\beta}}$ \\
     $ $ & = & $(ab) \psi\theta_{_{\alpha,\beta}}$ \\
    $ $ & = & $(a \psi b \psi)\theta_{_{\alpha,\beta}}$ (since $\psi$ is a homomorphism) \\
     $ $ & = & $(a \psi \theta_{_{\alpha,\beta}})(b \psi \theta_{_{\alpha,\beta}})$ ( by Condition (i)) \\
     $ $ & = & $(a \varphi_{_{\alpha, \beta}} \psi$)($b \varphi_{_{\alpha, \beta}} \psi$) (by Condition (iii)) \\
     $ $ & = & $(a \varphi_{_{\alpha, \beta}} + e_{\beta})$ $(b \varphi_{_{\alpha, \beta}} + e_{\beta})$ \\
     $ $ & = & $(a \varphi_{_{\alpha, \beta}})(b \varphi_{_{\alpha, \beta}}) + e_{\beta}$ \\
     $ $ & = & $(a \varphi_{_{\alpha, \beta}})(b \varphi_{_{\alpha, \beta}})$ (by Condition (ii)(4))  \\
     $ $ & = & $(a \Phi_{_{\alpha, \beta}})(b \Phi_{_{\alpha, \beta}})$.
\end{tabular}
    
\end{center}

\noindent
  Case 3 : if $a \in S_{\alpha}$, $b \in S_{\alpha}$, $a+b \in S_{\alpha}$ and $ab \in R_{\alpha}$, the skew-ring kernel of $T_{\alpha}$, then the proof follows from Case 1 and Case 2.

\noindent
  Case 4 : if $a \in S_{\alpha}$, $b \in S_{\alpha}$, $a+b \in R_{\alpha}$, the skew-ring kernel of $T_{\alpha}$ and $ab \in S_{\alpha}$,  then the proof follows from Case 1 and Case 2.

  \noindent
  Case 5 : if $a \in S_{\alpha}$, $b \in R_{\alpha}$ then $a+b, ab\in R_{\alpha}$, the skew-ring kernel of $T_{\alpha}$. Therefore,

\begin{center}

\begin{tabular}{ccl}
    $(a+b)\Phi_{_{\alpha,\beta}}$  &  = & $(a+b+e_{\alpha})\Phi_{_{\alpha,\beta}}$ \\
     $ $ & = & $(a+b+e_{\alpha})\theta_{_{\alpha,\beta}}$ \\
     $ $ & = & $(a+(e_{\alpha}+b))\theta_{_{\alpha,\beta}}$ \\
     $ $ & = & $(a+e_{\alpha})\theta_{_{\alpha,\beta}}+b\theta_{_{\alpha,\beta}}$  ( by Condition (i)) \\
     $ $ & = & $a \psi\theta_{_{\alpha,\beta}} + b \theta_{_{\alpha,\beta}}$ (by definition of $\psi$) \\
     $ $ & = & $a \varphi_{_{\alpha, \beta}} \psi$ + $b \theta_{_{\alpha,\beta}}$ (by Condition (iii))\\
     $ $ & = & $(a \varphi_{_{\alpha, \beta}} + e_{\beta})$ + $b \theta_{_{\alpha,\beta}}$ \\
     $ $ & = & $a \varphi_{_{\alpha, \beta}} + (e_{\beta}$ + $b \theta_{_{\alpha,\beta}})$ \\
     $ $ & = & $a \varphi_{_{\alpha, \beta}}$ + $b \theta_{_{\alpha,\beta}}$ \\
     $ $ & = & $a \Phi_{_{\alpha, \beta}}$ + $b \Phi_{_{\alpha,\beta}}$.
     
\end{tabular}
  
\begin{tabular}{ccl}
    $\,\,\,\,\,\,\,\,\,\,\,\,\,(ab)\Phi_{_{\alpha,\beta}}\,\,\,\,\,$  &  = & $(ab+e_\alpha)\Phi_{_{\alpha,\beta}}$ \\
     $ $ & = & $(ab+e_{\alpha})\theta_{_{\alpha,\beta}}$ \\
     $ $ & = & $((a+e_{\alpha})b) \theta_{_{\alpha,\beta}}$ \\
    $ $ & = & $((a+e_{\alpha}) \theta_{_{\alpha,\beta}})(b \theta_{_{\alpha,\beta}}) $ (by Condition(i)) \\
     $ $ & = & $(a \psi \theta_{_{\alpha,\beta}})(b \theta_{_{\alpha,\beta}})$\\
     $ $ & = & $(a \varphi_{_{\alpha, \beta}} \psi$)($b \theta_{_{\alpha,\beta}} $)  (by Condition (iii)) \\
     $ $ & = & $(a \varphi_{_{\alpha, \beta}} + e_{\beta})$ $(b \theta_{_{\alpha,\beta}})$ \\
     $ $ & = & $(a \varphi_{_{\alpha, \beta}})(b \theta_{_{\alpha,\beta}}) + e_{\beta}$ \\
      $ $ & = & $(a \varphi_{_{\alpha, \beta}})(b \theta_{_{\alpha,\beta}})$ (since $R_{\beta}$ is a bi-ideal of $T_{\beta}$)\\
     $ $ & = & $(a \Phi_{_{\alpha, \beta}})(b \Phi_{_{\alpha, \beta}})$.
\end{tabular}
    
\end{center}

\noindent
  Case 6 : if $a \in R_{\alpha}$, $b \in S_{\alpha}$ then $a+b, ab\in R_{\alpha}$. Applying similar arguments as in Case 5, we have,

\begin{center}

\begin{tabular}{ccl}
    \,\,\,\,$(a+b)\Phi_{_{\alpha,\beta}}$ \,\,\,\,&  = & $(a+b+e_{\alpha})\Phi_{_{\alpha,\beta}}$ \\
     $ $ & = & $(a+b+e_{\alpha})\theta_{_{\alpha,\beta}}$ \\
     $ $ & = & $a\theta_{_{\alpha,\beta}}$ + $(b+e_{\alpha})\theta_{_{\alpha,\beta}}$ \\
     $ $ & = & $a\theta_{_{\alpha,\beta}}+b \psi \theta_{_{\alpha,\beta}}$ \\
      $ $ & = & $a\theta_{_{\alpha,\beta}}+b \varphi_{_{\alpha, \beta}} \psi$ (by Condition (iii))\\
       $ $ & = & $a\theta_{_{\alpha,\beta}} + b \varphi_{_{\alpha, \beta}} + e_{\beta}$ \\
     $ $ & = & $a\theta_{_{\alpha,\beta}} + b \varphi_{_{\alpha, \beta}}$ (since $R_{\beta}$ is a bi-ideal of $T_{\beta}$)\\
     $ $ & = & $a\Phi_{_{\alpha,\beta}} + b \Phi_{_{\alpha, \beta}} $.
\end{tabular}
  
\begin{tabular}{ccl}
    $\,\,\,\,\,\,\,\,\,\,\,\,\,\,\,\,\,\,(ab)\Phi_{_{\alpha,\beta}}\,\,\,\,\,\,\,\,\,\,\,\,$  &  = & $(ab+e_\alpha)\Phi_{_{\alpha,\beta}}$ \\
     $ $ & = & $(ab+e_{\alpha})\theta_{_{\alpha,\beta}}$ \\
     $ $ & = & $(a(b+e_{\alpha})) \theta_{_{\alpha,\beta}}$ \\
    $ $ & = & $(a \theta_{_{\alpha,\beta}})((b+e_{\alpha}) \theta_{\alpha,\beta}) $ (by Condition (i)) \\
      $ $ & = & $(a \theta_{_{\alpha,\beta}})(b\psi\theta_{_{\alpha,\beta}}) $ \\
     $ $ & = & $(a \theta_{_{\alpha,\beta}})(b\varphi_{_{\alpha, \beta}}\psi)$  (by Condition (iii)) \\
     $ $ & = & $(a \theta_{_{\alpha,\beta}})(b \varphi_{_{\alpha, \beta}} + e_{\beta})$ \\
     $ $ & = & $(a \theta_{_{\alpha,\beta}})(b \varphi_{_{\alpha, \beta}}) + e_{\beta}$ \\
      $ $ & = & $(a \theta_{_{\alpha,\beta}})(b \varphi_{_{\alpha, \beta}})$ (since $R_{\beta}$ is a bi-ideal of $T_{\beta}$) \\
     $ $ & = & $(a \Phi_{_{\alpha, \beta}})(b \Phi_{_{\alpha, \beta}})$.
\end{tabular}
    
\end{center}

\noindent
Case 7: if $a \in R_{\alpha}$, $b \in R_{\alpha}$ then $a+b, ab\in R_{\alpha}$. By Condition (i),

\begin{center}
    $(a+b)\Phi_{\alpha,\beta}$ = $(a+b)\theta_{\alpha,\beta}$ = $a \theta_{\alpha,\beta} + b\theta_{\alpha, \beta}$ = $a\Phi_{\alpha,\beta} + b\Phi_{\alpha, \beta}$.

    $(ab)\Phi_{\alpha,\beta}$ = $(ab)\theta_{\alpha,\beta}$ = $(a\theta_{\alpha,\beta})(b\theta_{\alpha, \beta})$ = $(a\Phi_{\alpha,\beta})(b\Phi_{\alpha, \beta})$.
\end{center}

\noindent
Thus we see that the mapping $\Phi_{_{\alpha, \beta}}$ is a homomorphism from $T_{\alpha}$ to $T_{\beta}$.

\noindent
Clearly, from Theorem \ref{16} and Condition (i), $\theta_{_{\alpha,\beta}}$ is injective. Also, from Condition (ii), $\varphi_{_{\alpha,\beta}}$ is injective. These imply that $\Phi_{_{\alpha, \beta}}$ is injective and thus a monomorphism.

\noindent
From Theorem \ref{16}, Condition (i), Condition (ii)(1) and definition of $\Phi_{_{\alpha, \beta}}$, 

\begin{center}
 $\Phi_{_{\alpha,\alpha}} = I_{_{T_{\alpha}}}$, the identity mapping on $  T_{\alpha}$.   
\end{center}

\noindent
Let $\alpha, \beta, \gamma \in Y $ with $\alpha \leq \beta \leq \gamma$. If $a \in R_{\alpha}$ then clearly by Condition (i), $a\Phi_{_{\alpha,\beta}} \Phi_{_{\beta,\gamma}}$ = $a\Phi_{_{\alpha,\gamma}}$. Now let $a \in S_{\alpha}$.

\noindent
If $a \varphi_{_{\alpha, \beta}} \in S_{\beta}$, then by Condition (ii)(3), 

\begin{center}
$a\Phi_{_{\alpha,\beta}} \Phi_{_{\beta,\gamma}}$ = $a\varphi_{_{\alpha,\beta}} \Phi_{_{\beta,\gamma}}$ = $a\varphi_{_{\alpha,\beta}} 
 \varphi_{_{\beta,\gamma}}$ = $a\varphi_{_{\alpha,\gamma}}$ = $a\Phi_{_{\alpha,\gamma}}$.    
\end{center}

\noindent
If $a \varphi_{_{\alpha, \beta}} \notin S_{\beta}$, then by Condition (ii)(3), $a\varphi_{_{\alpha,\gamma}} \notin S_{\gamma}$. Now,

\begin{center}
\begin{tabular}{ccl}
   $a\Phi_{_{\alpha,\beta}} \Phi_{_{\beta,\gamma}}$  &  = & $a\varphi_{_{\alpha,\beta}} \theta_{_{\beta,\gamma}}$ \\
   $ $     &  = & $(a\varphi_{_{\alpha,\beta}}+e_{\beta})\theta_{_{\beta,\gamma}}$ \\
   $ $     &  = & $a\varphi_{_{\alpha,\beta}}\psi\theta_{_{\beta,\gamma}}$ \\
   $ $     &  = & $a \psi \theta_{_{\alpha,\beta}} \theta_{_{\beta,\gamma}}$ (by Condition (iii))\\
   $ $     &  = & $a \psi \theta_{_{\alpha,\gamma}}$ (by Condition (i))\\
   $ $     &  = & $a \varphi_{_{\alpha,\gamma}} \psi$ (by Condition (iii))\\
   $ $    &   = & $a \varphi_{_{\alpha,\gamma}}+e_{\gamma}$ \\
   $ $    &   = & $a \varphi_{_{\alpha,\gamma}}$ (by Condition (ii)(3)) \\
   $ $    &   = & $a \Phi_{_{\alpha,\gamma}}$.
\end{tabular} 
\end{center}

\noindent
For $\alpha, \beta, \gamma \in Y$, with $ \alpha+\beta \leq
\gamma$. In $Y$, we always have $ \alpha+\beta = \alpha+\beta + \alpha\beta$, i.e. if  $ \alpha+\beta \leq
\gamma$, $\alpha+\beta + \alpha\beta \leq
\gamma$. Let $a \in T_{\alpha}$, $b \in T_{\beta}$ then by Remark \ref{rem1}, $(a+b) \in T_{\alpha + \beta}$, $ab \in T_{\alpha \beta}$.   Now we have the following cases:

\noindent
Case 1: if $a \in S_{\alpha}$, $b \in S_{\beta}$, $ab \in S_{\alpha \beta}$, we have by Condition (ii)(4), 
$a \Phi_{_{\alpha, \gamma}} b \Phi_{_{\beta, \gamma}}$ = $a \varphi_{_{\alpha, \gamma}} b \varphi_{_{\beta, \gamma}}$ = $ab \varphi_{_{\alpha \beta, \gamma}}$ = $ab \Phi_{_{\alpha \beta, \gamma}}$, i.e. $S_{\alpha} \Phi_{_{\alpha, \gamma}} S_{\beta} \Phi_{_{\beta, \gamma}} \subseteq S_{\alpha \beta} \Phi_{_{\alpha \beta, \gamma}}$.

\noindent
Case 2:  if $a \in S_{\alpha}$, $b \in S_{\beta}$, $ab \in R_{\alpha \beta}$, which means $ab \notin S_{\alpha \beta}$, then 

\begin{center}
    \begin{tabular}{ccl}
       $a \Phi_{_{\alpha, \gamma}} b \Phi_{_{\beta, \gamma}}$   & = & $a \varphi_{_{\alpha, \gamma}} b \varphi_{_{\beta, \gamma}}$ \\
       $ $  & = & $a \varphi_{_{\alpha, \gamma}} b \varphi_{_{\beta, \gamma}}$ +$e_{\gamma}$ (by Condition (ii)(4)) \\
       $ $  & = & $(a \varphi_{_{\alpha, \gamma}}+e_{\gamma})(b \varphi_{_{\beta, \gamma}}+e_{\gamma})$\\
       $ $  & = & $(a \varphi_{_{\alpha, \gamma}} \psi)(b \varphi_{_{\beta, \gamma}} \psi)$ \\
       $ $  & = & $(a \psi \theta_{_{\alpha, \gamma}})(b \psi \theta_{_{\beta, \gamma}})$ (by Condition (iii))\\
       $ $  & = & $(a+e_{\alpha})\theta_{_{\alpha, \gamma}} (b+e_{\beta}) \theta_{_{\beta, \gamma}}$ \\
       $ $  & = & $(a+e_{\alpha}+e_{\gamma})(b+e_{\beta}+e_{\gamma})$ \\
       $ $  & = & $(a+e_{\gamma})(b+e_{\gamma})$ \\
       $ $  & = & $ab+e_{\gamma}$ \\
       $ $  & = & $ab \theta_{_{\alpha \beta, \gamma}}$ \\
       $ $  & = & $ab \Phi_{_{\alpha \beta, \gamma}} $.
       
    \end{tabular}
\end{center}

Thus, $S_{\alpha} \Phi_{_{\alpha, \gamma}} S_{\beta} \Phi_{_{\beta, \gamma}} \subseteq R_{\alpha \beta} \Phi_{_{\alpha \beta, \gamma}}$.

\noindent
Case 3: if $a \in R_{\alpha}$, $b \in R_{\beta}$ then $ab \in R_{\alpha \beta}$ and $R_{\alpha} \Phi_{_{\alpha, \gamma}} R_{\beta} \Phi_{_{\beta, \gamma}} \subseteq R_{\alpha \beta} \Phi_{_{\alpha \beta, \gamma}}$ (by Condition (i)).

\noindent
Case 4: if $a \in R_{\alpha}$, $b \in S_{\beta}$ then $ab \in R_{\alpha \beta}$ and we have,  

\begin{center}
    \begin{tabular}{ccl}
       \,\,\,\,\,\,$a \Phi_{_{\alpha, \gamma}} b \Phi_{_{\beta, \gamma}}$\,\,\,\,\,\,  &  = & $a \theta_{_{\alpha, \gamma}} b \varphi_{_{\beta, \gamma}}$ \\
       $ $  & = & $a\theta_{_{\alpha, \gamma}} b \varphi_{_{\beta, \gamma}}$ + $e_\gamma$ (Since $R_{\gamma}$ is a bi-ideal of $T_\gamma$)\\
        $ $ & = & $a\theta_{_{\alpha, \gamma}}(b\varphi_{_{\beta, \gamma}}$ + $e_\gamma)$\\
        $ $ & = & $a\theta_{_{\alpha, \gamma}}b\varphi_{_{\beta, \gamma}} \psi$\\
        $ $ & = & $a\theta_{_{\alpha, \gamma}} b\psi \theta_{_{\beta, \gamma}}$\\
        $ $ & = & $a\theta_{_{\alpha, \gamma}} (b+e_{\beta}) \theta_{_{\beta, \gamma}}$\\
        $ $ & = & $(a+e_\gamma)(b+e_{\beta}+e_\gamma)$\\
        $ $ & = & $(a+e_\gamma)(b+e_\gamma)$\\
        $ $ & = & $ab+e_\gamma$\\
        $ $  & = & $ab \theta_{_{\alpha \beta, \gamma}}$ \\
       $ $  & = & $ab \Phi_{_{\alpha \beta, \gamma}} $.
        
    \end{tabular}
\end{center}

 This proves $R_{\alpha} \Phi_{_{\alpha, \gamma}} S_{\beta} \Phi_{_{\beta, \gamma}} \subseteq R_{\alpha \beta} \Phi_{_{\alpha \beta, \gamma}}$.

\noindent
Case 5: if $a \in S_{\alpha}$, $b \in R_{\beta}$ then $ab \in R_{\alpha \beta}$ and we have,  

\begin{center}
    \begin{tabular}{ccl}
       $\,\,\,\,\,\,\,\,a \Phi_{_{\alpha, \gamma}} b \Phi_{_{\beta, \gamma}}$ \,\,\,\,\,&  = & $a \varphi_{_{\alpha, \gamma}} b \theta_{_{\beta, \gamma}}$ \\
       $ $  & = &  $a \varphi_{_{\alpha, \gamma}} b \theta_{_{\beta, \gamma}} +e_\gamma$  (Since $R_\gamma$ is a bi-ideal of $T_\gamma$)\\
        $ $ & = & $(a\varphi_{_{\alpha, \gamma}}$ + $e_\gamma)b\theta_{_{\beta, \gamma}}$\\
        $ $ & = & $a\varphi_{_{\alpha, \gamma}} \psi b\theta_{_{\beta, \gamma}}$\\
        $ $ & = & $a \psi\theta_{_{\alpha, \gamma}} b\theta_{_{\beta, \gamma}}$\\
        $ $ & = & $(a+ e_\alpha)\theta_{_{\alpha, \gamma}} b\theta_{_{\beta, \gamma}}$\\
        $ $ & = & $(a + e_\alpha + e_\gamma) (b+ e_\gamma)$\\
        $ $ & = & $(a + e_\gamma) (b+ e_\gamma)$\\
        $ $ & = & $ab+e_\gamma$\\
        $ $  & = & $ab \theta_{_{\alpha \beta, \gamma}}$ \\
       $ $  & = & $ab \Phi_{_{\alpha \beta, \gamma}} $.
        
    \end{tabular}
\end{center}
 This proves $S_{\alpha} \Phi_{_{\alpha, \gamma}} R_{\beta} \Phi_{_{\beta, \gamma}} \subseteq R_{\alpha \beta} \Phi_{_{\alpha \beta, \gamma}}$.
 
\noindent
Thus we conclude that $T_{\alpha} \Phi_{_{\alpha, \gamma}} T_{\beta} \Phi_{_{\beta, \gamma}} \subseteq T_{\alpha \beta} \Phi_{_{\alpha \beta, \gamma}}$ if $ \alpha+\beta \leq
\gamma$.

\noindent
Next we consider addition and multiplication on $S$. For any $\alpha, \beta \in Y$, let $ a \in T_\alpha,$ and $b \in T_\beta $. The following are the cases that may arise :

\noindent
Case 1: if $a \in S_{\alpha}$, $b \in S_{\beta}$, $a+b \in S_{\alpha + \beta}$ and $ab \in S_{\alpha \beta}$, then by Condition (ii)(5) we have $a+b=a\varphi_{_{\alpha,\alpha+\beta}}+b\varphi_{_{\beta, \alpha+\beta}}$ = $a\Phi_{_{\alpha,\alpha+\beta}}+b\Phi_{_{\beta, \alpha+\beta}}$ and $(ab) \Phi_{_{\alpha\beta, \alpha+\beta}}$ = $(ab) \varphi_{_{\alpha\beta, \alpha+\beta}}=(a\varphi_{_{\alpha,\alpha+\beta}})(b\varphi_{_{\beta, \alpha+\beta}})$ = $(a\Phi_{_{\alpha,\alpha+\beta}})(b\Phi_{_{\beta, \alpha+\beta}})$.

\noindent
Case 2: if $a \in S_{\alpha}$, $b \in S_{\beta}$, $a+b \in R_{\alpha + \beta}$ and $ab \in R_{\alpha \beta}$, then 

\begin{center}
    \begin{tabular}{ccl}
        \,\,\,\,\,\,\,\,\,\,\,\,$\,\,\,\,\,\,\,\,\,\,a+b\,\,\,\,\,\,\,\,\,\,$ & = & $(a+b)+e_{\alpha + \beta}$ \,\,\,\,\,\,\,\,\,\,\\
        $ $ & = & $a+(b+e_{\alpha + \beta})$ \\
        $ $   & = & $a+e_{\alpha + \beta}+(b+e_{\alpha + \beta})$ (Since $R_{\alpha+\beta}$ is a bi-ideal of $T_{\alpha+\beta}$)\\
        $ $   & = & $(a+e_\alpha)+e_{\alpha + \beta}+(b+e_{\beta})+e_{\alpha + \beta}$ \\
        $ $   & = & $(a+e_\alpha) \theta_{_{\alpha, \alpha + \beta}}+(b+e_{\beta})\theta_{_{\beta, \alpha + \beta}}$ 
 (by Condition (i))\\
        $ $   & = & $a \psi \theta_{_{\alpha, \alpha + \beta}} + b \psi \theta_{_{\beta, \alpha + \beta}} $ \\
        $ $   & = & $a \varphi_{_{\alpha, \alpha + \beta}} \psi + b \varphi_{_{\beta, \alpha + \beta}} \psi$ (by Condition (iii)) \\
        $ $   & = & $(a \varphi_{_{\alpha, \alpha + \beta}}+e_{\alpha+\beta}) + (b \varphi_{_{\beta, \alpha + \beta}}+ e_{\alpha + \beta})$ \\
        $ $   & = & $a \varphi_{_{\alpha, \alpha + \beta}}+(e_{\alpha+\beta} + b \varphi_{_{\beta, \alpha + \beta}}+ e_{\alpha + \beta})$ \\
        $ $   & = & $a \varphi_{_{\alpha, \alpha + \beta}} + b \varphi_{_{\beta, \alpha + \beta}} + e_{\alpha + \beta}$ \\
        $ $   & = & $a \varphi_{_{\alpha, \alpha + \beta}} + b \varphi_{_{\beta, \alpha + \beta}}$ (by Condition (ii)(2)) \\
        $ $   & = & $a \Phi_{_{\alpha, \alpha + \beta}}+b \Phi_{_{\beta, \alpha + \beta}}$.
    \end{tabular}
\end{center}

\begin{center}
    \begin{tabular}{ccl}
        $(ab) \Phi_{_{\alpha \beta, \alpha+\beta}}$ & = & $(ab) \theta_{_{\alpha \beta, \alpha+\beta}}$ \\
        $ $ & = & $ab+e_{\alpha + \beta}$ \\
        $ $   & = & $(a+e_{\alpha + \beta})(b+e_{\alpha + \beta})$ \\
        $ $   & = & $((a+e_\alpha)+e_{\alpha + \beta})((b+e_{\beta})+e_{\alpha + \beta})$ \\
        $ $   & = & $((a+e_\alpha) \theta_{_{\alpha, \alpha + \beta}})((b+e_{\beta})\theta_{_{\beta, \alpha + \beta}})$ 
 (by Condition (i))\\
        $ $   & = & $(a \psi \theta_{_{\alpha, \alpha + \beta}})(b \psi \theta_{_{\beta, \alpha + \beta}}) $ \\
        $ $   & = & $(a \varphi_{_{\alpha, \alpha + \beta}} \psi)(b \varphi_{_{\beta, \alpha + \beta}} \psi$) (by Condition (iii)) \\
        $ $   & = & $(a \varphi_{_{\alpha, \alpha + \beta}}+e_{\alpha+\beta}) (b \varphi_{_{\beta, \alpha + \beta}}+ e_{\alpha + \beta})$ \\
        $ $   & = & $(a \varphi_{_{\alpha, \alpha + \beta}})(b \varphi_{_{\beta, \alpha + \beta}})+ e_{\alpha + \beta}$ \\
        $ $   & = & $(a \varphi_{_{\alpha, \alpha + \beta}})(b \varphi_{_{\beta, \alpha + \beta}})$ (by Condition (ii)(4)) \\
        $ $   & = & $(a \Phi_{_{\alpha, \alpha + \beta}})(b \Phi_{_{\beta, \alpha + \beta}})$.
    \end{tabular}
\end{center}

\noindent
Case 3: if $a \in S_{\alpha}$, $b \in S_{\beta}$, $a+b \in R_{\alpha + \beta}$ and $ab \in S_{\alpha \beta}$, then the results can be derived from Case 1 and Case 2.

\noindent
Case 4: if $a \in S_{\alpha}$, $b \in S_{\beta}$, $a+b \in S_{\alpha + \beta}$ and $ab \in R_{\alpha \beta}$, then the results can be derived from Case 1 and Case 2.

\noindent
Case 5: if $a \in R_{\alpha}$, $b \in R_{\beta}$, then $a+b \in R_{\alpha + \beta}$ and $ab \in R_{\alpha \beta}$, and $a+b=a\theta_{_{\alpha,\alpha+\beta}}+b\theta_{_{\beta, \alpha+\beta}}$ = $a\Phi_{_{\alpha,\alpha+\beta}}+b\Phi_{_{\beta, \alpha+\beta}}$ and $(ab) \Phi_{_{\alpha\beta, \alpha+\beta}}$ = $(ab) \theta_{_{\alpha\beta, \alpha+\beta}}=(a\theta_{_{\alpha,\alpha+\beta}})(b\theta_{_{\beta, \alpha+\beta}})$ = $(a\Phi_{_{\alpha,\alpha+\beta}})(b\Phi_{_{\beta, \alpha+\beta}})$ (by Condition (i)).

\noindent
Case 6: if $a \in R_{\alpha}$, $b \in S_{\beta}$ then since $ Reg^{+}(S)$ is a bi-ideal we have $a+b \in R_{\alpha + \beta}$ and $ab \in R_{\alpha \beta}$. Then,

\begin{center}
    \begin{tabular}{ccl}
        $\,\,\,a+b \,\,\,$ & = & $(a+b)+e_{\alpha + \beta}$\\
        $ $ & = & $a+e_{\alpha + \beta}+(b+e_{\alpha+\beta})$ \\
        $ $   & = & $a \theta_{_{\alpha, \alpha + \beta}}+(b+e_{\alpha + \beta})$ 
 (by Condition (i))\\
        $ $   & = & $a \theta_{_{\alpha, \alpha + \beta}}+(b+e_{\beta})+e_{\alpha + \beta}$ \\
        $ $   & = & $a \theta_{_{\alpha, \alpha + \beta}}+b \psi\theta_{_{\beta,\alpha + \beta}}$ (by Condition (i))\\
        $ $   & = & $a \theta_{_{\alpha, \alpha + \beta}}+b \varphi_{_{\beta,\alpha + \beta}} \psi$ (by Condition (iii))\\
        $ $   & = & $a \theta_{_{\alpha, \alpha + \beta}}+b \varphi_{_{\beta,\alpha + \beta}}+e_{\alpha+\beta}$ \\
        $ $   & = & $(a \theta_{_{\alpha, \alpha + \beta}}+b \varphi_{_{\beta,\alpha + \beta}})+e_{\alpha+\beta}$ \\
        $ $   & = & $(a \theta_{_{\alpha, \alpha + \beta}}+b \varphi_{_{\beta,\alpha + \beta}})$\\
        $ $   & = & $a \Phi_{_{\alpha, \alpha + \beta}}+b \Phi_{_{\beta, \alpha + \beta}}$.
    \end{tabular}
\end{center}

\begin{center}
    \begin{tabular}{ccl}
        $(ab) \Phi_{_{\alpha \beta, \alpha+\beta}}$ & = & $(ab) \theta_{_{\alpha \beta, \alpha+\beta}}$ \\
        $ $ & = & $ab+e_{\alpha + \beta}$ \\
        $ $   & = & $(a+e_{\alpha + \beta})(b+e_{\alpha + \beta})$ \\
        $ $   & = & $(a \theta_{_{\alpha, {\alpha + \beta}}})(b+e_{\alpha + \beta})$ \\
        $ $   & = & $(a \theta_{_{\alpha, {\alpha + \beta}}})((b+e_{\beta})+e_{\alpha + \beta})$ \\
        $ $   & = &  $(a \theta_{_{\alpha, {\alpha + \beta}}})((b+e_{\beta})\theta_{_{\beta, \alpha + \beta}})$ 
 (by Condition (i))\\
        $ $   & = & $(a \theta_{_{\alpha, {\alpha + \beta}}})(b \psi \theta_{_{\beta, \alpha + \beta}}) $ \\
        $ $   & = & $(a \theta_{_{\alpha, {\alpha + \beta}}})(b \varphi_{_{\beta, \alpha + \beta}} \psi$) (by Condition (iii)) \\
        $ $   & = & $(a \theta_{_{\alpha, {\alpha + \beta}}}) (b \varphi_{_{\beta, \alpha + \beta}}+ e_{\alpha + \beta})$ \\
        $ $   & = & $(a \theta_{_{\alpha, {\alpha + \beta}}})(b \varphi_{_{\beta, \alpha + \beta}})+ e_{\alpha + \beta}$ \\
        $ $   & = & $(a \theta_{_{\alpha, {\alpha + \beta}}})(b \varphi_{_{\beta, \alpha + \beta}})$\\
        $ $   & = & $(a \Phi_{_{\alpha, \alpha + \beta}})(b \Phi_{_{\beta, \alpha + \beta}})$.
    \end{tabular}
\end{center}

\noindent
Case 7: if $a \in S_{\alpha}$, $b \in R_{\beta}$ then since $ Reg^{+}(S)$ is a bi-ideal we have $a+b \in R_{\alpha + \beta}$ and $ab \in R_{\alpha \beta}$. Then we can prove in a similar way as in Case 6 and obtain $a+b$ = $a \Phi_{_{\alpha, \alpha + \beta}}+b \Phi_{_{\beta, \alpha + \beta}}$ and $(ab) \Phi_{_{\alpha \beta, \alpha+\beta}}$ = $(a \Phi_{_{\alpha, \alpha + \beta}})(b \Phi_{_{\beta, \alpha + \beta}})$.

\noindent
Thus it is proved that $S$ is a strong b-lattice of quasi skew-rings, denoted by $S<Y, T_\alpha, \Phi_{_{\alpha,\beta}}>$ if it satisfies the conditions.

\noindent
Conversely, suppose that $S$ is a strong b-lattice of quasi skew-rings, denoted by $S<Y, T_\alpha, \Phi_{_{\alpha,\beta}}>$. By definition of strong b-lattice of semirings, $S = \displaystyle{\bigcup_{\alpha \in Y}T_\alpha}$, where $Y$ is a b-lattice and $T_{\alpha}$ is quasi skew-ring for each $\alpha \in Y$. Then $S$ is clearly a quasi completely inverse semiring. For each $\alpha \in Y$, $R_\alpha$ is a bi-ideal of $T_\alpha$ \cite{maity2} i.e., $T_\alpha$ is a nil-extension of $R_\alpha$. We recall that any completely regular semiring is a union of (disjoint) skew-rings \cite{maity5}. Therefore by Theorem \ref{2}, $\displaystyle{\bigcup_{\alpha \in Y}R_\alpha} = Reg^{+}(S)$. Now if $a \in Reg^{+}(S)$, $b \in S$, then $a \in R_{\alpha}$ for some $\alpha \in Y$ and we can very clearly show that $a+b, b+a, ab, ba \in Reg^{+}(S)$, i.e., $Reg^{+}(S)$ is a bi-ideal of $S$.
Now, $Reg^{+}(S)$ is clearly an additive inverse subsemiring semiring and completely regular subsemiring of $S$. Therefore $S$ is a strongly additively quasi completely inverse semiring and by \cite[Theorem 1.2.]{howie}, the additive idempotent elements of $Reg^{+}(S)$ commute. This implies that $(Reg^{+}(S), +)$ is a Clifford semigroup and hence by \cite[Theorem 2.1.]{howie}, $(Reg^{+}(S), +)$ is a strong semilattice of groups $(R_{\alpha}, +)$ on the semilattice $(Y,+)$ where the structural description of $\Phi_{_{\alpha,\beta}}$ is injective. Hence from \cite[Theorem 2.5.]{sen1}, clearly $Reg^{+}(S)$ is a generalized Clifford subsemiring of $S$.
Let ${\Phi_{_{_{\alpha, \beta}}}}{\big{|}_{R_{\alpha}}}=\theta_{_{_{\alpha, \beta}}}$. Then $Reg^{+}(S) = R <Y, R_\alpha, \theta_{_{\alpha,\beta}}> $. Let ${\Phi_{_{_{\alpha, \beta}}}}{\big{|}_{S_{\alpha}}}=\varphi_{_{_{\alpha, \beta}}}$. Then it can be easily proved that $\varphi_{_{_{\alpha, \beta}}}$ is a homomorphism from the partial semiring $S_{\alpha}$ to $T_{\beta}$ such that conditions of (ii) and (iii) are satisfied. 

For any $\alpha, \beta, \gamma \in Y$ with $\alpha+\beta \leq \gamma$ and $a \in S_{\alpha}$, $b \in S_\beta$, 

\begin{center}
 \begin{tabular}{ccl}
   $a \varphi_{_{_{\alpha, \gamma}}} + b \varphi_{_{_{\beta, \gamma}}}$   & = &  $a \Phi_{_{_{\alpha, \gamma}}}$ + ~$b \Phi_{_{_{\beta, \gamma}}}$ =  $a \Phi_{_{_{\alpha, \alpha +  \beta}}} \Phi_{_{_{\alpha + \beta, \gamma}}}$ + ~$b \Phi_{_{_{\beta, \alpha + \beta}}} \Phi_{_{_{\alpha + \beta, \gamma}}}$ \\
   $ $ & = &  $(a \Phi_{_{_{\alpha, \alpha + \beta}}} + ~ b \Phi_{_{_{\beta, \alpha + \beta}}}) \Phi_{_{_{\alpha + \beta, \gamma}}}$ = $(a+b) \Phi_{_{_{\alpha + \beta, \gamma}}}$. \\
\end{tabular}   
\end{center}

From above we can clearly see that if $a+b \notin S_{\alpha + \beta}$, $a \varphi_{_{_{\alpha, \gamma}}} + ~b \varphi_{_{_{\beta, \gamma}}} \notin S_{\gamma}$. So Condition (ii) (2) holds.  
Again,

\begin{center}
 \begin{tabular}{ccl}
   $(a \varphi_{_{_{\alpha, \gamma}}}) (b \varphi_{_{_{\beta, \gamma}}})$   & = &  $(a \Phi_{_{_{\alpha, \gamma}}}) (b \Phi_{_{_{\beta, \gamma}}})$ =  $(a \Phi_{_{_{\alpha, \alpha + \beta}}} \Phi_{_{_{\alpha + \beta, \gamma}}})$ $(b \Phi_{_{_{\beta, \alpha + \beta}}} \Phi_{_{_{\alpha + \beta, \gamma}}})$ \\
   $ $ & = &  $(a \Phi_{_{_{\alpha, \alpha + \beta}}} b \Phi_{_{_{\beta, \alpha + \beta}}}) \Phi_{_{_{\alpha + \beta, \gamma}}}$ = $(ab \Phi_{_{_{\alpha \beta, \alpha + \beta}}}) \Phi_{_{_{\alpha + \beta, \gamma}}}$ \\
    $ $ & = &  $ab \Phi_{_{_{\alpha \beta, \gamma}}}$.  
\end{tabular}   
\end{center}

Also, if $ab \notin S_{\alpha \beta}$, $(a \varphi_{_{_{\alpha, \gamma}}}) (b \varphi_{_{_{\beta, \gamma}}}) \notin S_{\gamma}$. So Condition (ii) (4) holds. 

\noindent
It is easy to see that Condition (ii)(1), (3) and (5) will hold from the definition of strong b-lattice.

Moreover, for any $\alpha, \beta \in Y$ with $\alpha \leq \beta$ and $a \in S_{\alpha}$, then 

\begin{center}
    $a \psi \theta_{_{_{\alpha, \beta}}}$ = $(a+e_{\alpha}) \theta_{_{_{\alpha, \beta}}}$ = $(a+e_{\alpha}) \Phi_{_{_{\alpha, \beta}}}$ = $a \Phi_{_{_{\alpha, \beta}}} + e_{\alpha} \Phi_{_{_{\alpha, \beta}}}$ = $a \varphi_{_{_{\alpha, \beta}}} + e_{\beta}$ = $a \varphi_{_{_{\alpha, \beta}}} \psi$. 
\end{center}

This completes the proof.
\end{proof}

\end{document}